  \theoremstyle{plain}
  \newtheorem{lem}{\protect\lemmaname}
  \theoremstyle{plain}
  \newtheorem*{prop*}{\protect\propositionname}
\theoremstyle{plain}
\newtheorem{thm}{\protect\theoremname}
  \theoremstyle{plain}
  \newtheorem*{cor*}{\protect\corollaryname}
  \providecommand{\corollaryname}{Corollary}
  \providecommand{\lemmaname}{Lemma}
  \providecommand{\propositionname}{Proposition}
\providecommand{\theoremname}{Theorem}
\begin{document}
\global\long\def\dual#1{#1^{{\scriptscriptstyle \vee}}}

\global\long\def\trans#1{^{t}\!#1}

\global\long\def\set#1#2{\left\{  #1\, |\, #2\right\}  }

\global\long\def\map#1#2#3{#1\!:#2\!\rightarrow\!#3}

\global\long\def\aut#1{\mathrm{Aut}\!\left(#1\right)}

\global\long\def\End#1{\mathrm{End}\!\left(#1\right)}

\global\long\def\id#1{\mathbf{id}_{#1}}

\global\long\def\ip#1{\left[#1\right]}

\global\long\def\uhp{\mathbf{H}}

\global\long\def\sm#1#2#3#4{\left(\begin{smallmatrix}#1  &  #2\cr\cr#3  &  #4\end{smallmatrix}\right)}

\global\long\def\cyc#1{\mathbb{Q}\left[\zeta_{#1}\right]}

\global\long\def\ZN#1{\left(\mathbb{Z}/#1\mathbb{Z}\right)^{\times}}

\global\long\def\Mod#1#2#3{#1\equiv#2\, \left(\mathrm{mod}\, \, #3\right)}

\global\long\def\psl{\mathrm{PSL}_{2}\!\left(\mathbb{Z}\right)}

\global\long\def\SL{\mathrm{SL}_{2}\!\left(\mathbb{Z}\right)}

\global\long\def\gl#1#2{\mathrm{GL}_{#1}\!\left(#2\right)}

\global\long\def\tr#1{\mathrm{Tr}#1}

\global\long\def\mr#1#2#3#4{\rho\!\left(\!\begin{array}{cc}
 #1  &  #2\\
#3  &  #4 
\end{array} \!\right)}

\global\long\def\FA{\mathbb{X}}

\global\long\def\FB{\mathbf{\boldsymbol{\Lambda}}}

\global\long\def\FC{\mathcal{M}}

\global\long\def\FD{\boldsymbol{\Gamma}}

\global\long\def\FE#1{\mathfrak{f}_{#1}}

\global\long\def\FF{\mathbb{C}\!\left[J\right]}

\global\long\def\FG{\upepsilon}

\global\long\def\FH{\mathsf{D}}

\global\long\def\sig#1{\upsigma\!\left(#1\right)}

\global\long\def\sof{\mathsf{M}}

\global\long\def\wof#1#2{\mathcal{M}_{#1}\!\left(#2\right)}

\global\long\def\mof#1#2{\mathsf{M}_{#1}\!\left(#2\right)}

\global\long\def\muf#1#2{\mathsf{M}_{#1}^{\circ}\!\left(#2\right)}

\global\long\def\cof#1#2{\mathsf{S}_{#1}\!\left(#2\right)}

\global\long\def\cuf#1#2{\mathsf{S}_{#1}^{\circ}\!\left(#2\right)}

\global\long\def\kan{\varkappa}

\global\long\def\un{\updelta(\tau)}

\global\long\def\ws{\boldsymbol{\varpi}}

\global\long\def\dll#1{\varGamma_{#1}}

\global\long\def\rem#1#2{#1_{#2}}

\global\long\def\detxi#1{\varDelta{}_{#1}}

\global\long\def\ks{\boldsymbol{\uplambda}_{{\scriptscriptstyle +}}}

\global\long\def\kso{\boldsymbol{\uplambda}_{{\scriptscriptstyle -}}}

\global\long\def\gam#1{\upgamma_{#1}}

\global\long\def\Gmof#1#2{\mathfrak{M}_{#1}\!\left(#2\right)}

\global\long\def\FL#1#2#3{\mathfrak{s}_{#3}\!\left(#1,#2\right)}

\global\long\def\hilb#1#2{P_{#1}\!\left(#2\right)}

\title{A trace formula for vector-valued modular forms}

\author{P. Bantay}

\curraddr{Institute for Theoretical Physics, E�tv�s Lor�nd University, Budapest}

\email{bantay@poe.elte.hu}

\thanks{Work supported by grant OTKA 79005.}

\subjclass[2000]{11F99, 13C05}

\keywords{modular forms, trace formula, Hilbert-polynomial}
\begin{abstract}
We present a formula for vector-valued modular forms, expressing the
value of the Hilbert-polynomial of the module of holomorphic forms
evaluated at specific arguments in terms of traces of representation
matrices, restricting the weight distribution of the free generators.
\end{abstract}
\maketitle

\section{Introduction}

The classical theory of scalar modular forms \cite{apostol2,Knopp1970,koblitz}
has been a major theme of mathematics for the last two hundred years.
Its applications are numerous, ranging from number theory to topology
and mathematical physics, a showpiece being the mathematics involved
in the proof of Fermat's Last Theorem \cite{diamond}. A major result
is the description of the ring of scalar holomorphic forms as a bivariate
polynomial algebra \cite{Lang,Serre}, which allows to determine explicit
bases for the spaces of holomorphic and cusp forms of different weights.

While the need to generalize the theory to vector-valued forms transforming
according to some higher dimensional representation of the modular
group has been recognized long ago, its systematic development has
begun only recently \cite{Knopp2004,Bantay2006,Mason2007,Bantay2008}.
The importance of vector-valued modular forms for mathematics lies,
besides the intrinsic interest of the subject, in the fact that important
classical problems may be reduced to the study of suitable vector-valued
forms, like the theory of Jacobi forms \cite{Eichler1985} or of scalar
modular forms for finite index subgroups \cite{Selberg1965}; from
a modern perspective, trace functions of vertex operator algebras
\cite{FLM1,Kac} satisfying suitable restrictions also provide important
examples of vector-valued modular forms \cite{Zhu1996}. From the
point of view of theoretical physics, vector-valued modular forms
play an important role in string theory \cite{GSW,Polch} and two-dimensional
conformal field theory \cite{DiFrancesco-Mathieu-Senechal}, as the
basic ingredients (chiral blocks) of torus partition functions and
other correlators. 

The above connections justify amply the interest in obtaining a better
understanding of the spaces of vector-valued modular forms. In this
respect, a most interesting question is to find explicit expressions
for the quantities characterizing these spaces, e.g. the dimension
of holomorphic or cusp forms, in terms of the associated representation
of the modular group. Such results are known for some restricted class
of representations, e.g. those having finite image, but their generalization
is not obvious.

A result of Marks and Mason \cite{Marks2009} states that, for a broad
class of representations, the set of all holomorphic forms (of all
possible weights) is a free module over the ring of scalar holomorphic
forms, whose rank equals the dimension of the representation. This
means that, should one know explicitly a free generating set, one
would have complete control over forms for the given representation;
of course, the difficulty lies in obtaining a set of free generators
from the sole knowledge of the representation. More restricted, but
still very useful and non-trivial information is contained in the
weight distribution of the free generators, which can be encoded in
the Hilbert-polynomial of the module of holomorphic forms, and whose
knowledge is enough to determine, in particular, the dimensions of
the spaces of holomorphic forms of different weights. The aim of the
present paper is to show how one can relate the value of the Hilbert-polynomial,
evaluated at specific arguments, to traces of representation operators:
this restricts to a great extent the weight distribution of the free
generators. The basic idea is to investigate weakly-holomorphic forms
alongside holomorphic ones, leading to an explicit expression for
the determinant of the matrix formed from a set of free generators,
and to use the weight-shifting map to compare such determinants.

\section{Scalar modular forms}

A (scalar) modular form of weight $w$ is a complex-valued function
$f\!:\!\uhp\!\rightarrow\!\mathbb{C}$ that is holomorphic everywhere
in the upper half-plane $\uhp\!=\!\set{\tau}{\mathrm{Im}\tau\!>\!0}$,
and transforms according to the rule
\begin{equation}
f\!\left(\frac{a\tau+b}{c\tau+d}\right)=\left(c\tau+d\right)^{w}f\!\left(\tau\right)\:\label{eq:scalartrans}
\end{equation}
for $\sm abcd\!\in\!\FD\!=\!\SL$. Note that the weight $w$ should
be an even integer for non-trivial forms to exist. A form $f\!\left(\tau\right)$
is called weakly holomorphic if it has at worst finite order poles
in the limit $\tau\!\rightarrow\!\mathsf{i}\infty$, i.e. its Laurent-expansion
in terms of the local uniformizing parameter $q\!=\!\exp\!\left(2\pi\mathsf{i}\tau\right)$
has only finitely many terms with negative exponents; it is holomorphic,
respectively a cusp form, if it is bounded (resp. vanishes) as $\tau\!\rightarrow\!\mathsf{i}\infty$,
meaning that its Laurent-expansion contains only non-negative (resp.
positive) powers of $q$. For non-trivial holomorphic (resp. cusp)
forms to exist the weight should be non-negative (resp. positive).
We'll denote by $\FC_{2k}$ the (infinite dimensional) linear space
of weakly holomorphic forms of weight $2k$, and by $\sof_{2k}$ (resp.
$\mathsf{S}_{2k}$) the finite dimensional subspaces of holomorphic
and cusp forms; clearly, we have the inclusions $\mathsf{S}_{2k}<\sof_{2k}<\FC_{2k}$.
Since the product of (weakly) holomorphic (resp. cusp) forms is again
a (weakly) holomorphic (resp. cusp) form, the direct sums $\FC=\bigoplus_{k\in\mathbb{Z}}\FC_{2k}$
and $\mathsf{M}=\bigoplus_{k=0}^{\infty}\sof_{2k}$ (resp. $\mathsf{S}=\bigoplus_{k=1}^{\infty}\mathsf{S}_{2k}$)
are graded rings.

By a well known result \cite{apostol2,Serre}, $\sof_{0}$ consists
of constants, $\sof_{2}$ is empty, while $\sof_{4}$ and $\sof_{6}$,
each having dimension $1$, are spanned by the Eisenstein series
\begin{align}
E_{4}\!\left(q\right)=\, & 1+240\sum\limits _{n=1}^{\infty}\sigma_{3}\!\left(n\right)q^{n}\,\label{eq:eisexpl4}\\
\intertext{and}E_{6}\!\left(q\right)=\, & 1-504\sum\limits _{n=1}^{\infty}\sigma_{5}\!\left(n\right)q^{n}\,,\label{eq:eisexpl6}
\end{align}
where $\sigma_{k}\!\left(n\right)=\sum_{d|n}d^{k}\,$ is the $k^{\mathrm{th}}$
power sum of the divisors of $n$. What is more, any holomorphic form
may be expressed uniquely as a bivariate polynomial in the Eisenstein
series $E_{4}\!\left(q\right)$ and $E_{6}\!\left(q\right)$, in other
words 
\begin{equation}
\mathsf{M}=\bigoplus_{k=0}^{\infty}\sof_{2k}=\mathbb{C}\!\left[E_{4},E_{6}\right]\,\label{eq:modring}
\end{equation}
as graded rings. On the other hand, there are no cusp forms of weight
less than $12$, while $\mathsf{S}_{12}$ is spanned by the discriminant
form 
\begin{equation}
\Delta\!\left(q\right)\,=\!\frac{1}{1728}\left(E_{4}\!\left(q\right)^{3}-E_{6}\!\left(q\right)^{2}\right)\!=\! q\prod_{n=1}^{\infty}\left(1-q^{n}\right)^{{\scriptscriptstyle 24}}\,,\label{eq:deltadef}
\end{equation}
and any cusp form of weight $k\!\geq\!12$ is the product of $\Delta\!\left(q\right)$
with a holomorphic form of weight $k\!-\!12$, i.e. $\mathsf{S}$
is the principal ideal of $\mathsf{M}$ generated by $\Delta\!\left(q\right)$. 

We shall need the following result.
\begin{lem}
\label{lem:division}
\[
\sof_{2n}\subseteq E_{4}^{\rem n3}E_{6}^{\rem n2}\sof_{12\rem n{\infty}}\,
\]
for any non-negative integer $n$, where $\rem nk$ denotes the (non-negative)
remainder of division of $-n$ by the integer $k$, and 
\begin{equation}
\rem n{\infty}\!=\!\frac{n}{6}-\frac{\rem n2}{2}-\frac{\rem n3}{3}\,.\label{eq:ninfdef}
\end{equation}
\end{lem}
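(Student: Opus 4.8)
The plan is to exploit the identification \eqref{eq:modring} of $\sof$ with the polynomial ring $\mathbb{C}\!\left[E_{4},E_{6}\right]$, which turns the asserted inclusion into a purely combinatorial divisibility statement. Since $E_{4}$ and $E_{6}$ are algebraically independent of weights $4$ and $6$, the weight grading is exactly the polynomial grading attached to these degrees, so the weight-$2n$ piece $\sof_{2n}$ is spanned over $\mathbb{C}$ by the monomials $E_{4}^{a}E_{6}^{b}$ with $4a+6b=2n$, that is $2a+3b=n$ and $a,b\geq0$. It thus suffices to prove that every such monomial is divisible by $E_{4}^{\rem n3}E_{6}^{\rem n2}$ and to identify the weight of the quotient.

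First I would extract congruences from $2a+3b=n$. Reducing modulo $3$ gives $2a\equiv n$, and multiplying by the inverse of $2$ yields $\Mod a{-n}3$; as $\rem n3$ is by definition the least non-negative residue of $-n$ modulo $3$, this forces $a\geq\rem n3$. Reducing modulo $2$ gives $3b\equiv n$, hence $\Mod b{-n}2$ and $b\geq\rem n2$. Therefore each admissible monomial---and so, by linearity, every $f\in\sof_{2n}$---is divisible in $\mathbb{C}\!\left[E_{4},E_{6}\right]$ by $E_{4}^{\rem n3}E_{6}^{\rem n2}$.

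It remains to locate the quotient. Dividing a weight-$2n$ form by $E_{4}^{\rem n3}E_{6}^{\rem n2}$, which has weight $4\rem n3+6\rem n2$, again produces a polynomial in $E_{4}$ and $E_{6}$, hence a holomorphic form, of weight $2n-4\rem n3-6\rem n2$; a direct computation from \eqref{eq:ninfdef} shows this equals $12\rem n{\infty}$, giving exactly $\sof_{2n}\subseteq E_{4}^{\rem n3}E_{6}^{\rem n2}\sof_{12\rem n{\infty}}$. The argument is essentially bookkeeping, so the only points needing care are the degenerate cases: when $2a+3b=n$ has no non-negative solution (for instance weight $2$), $\sof_{2n}=0$ and the inclusion holds vacuously, while whenever $\sof_{2n}\neq0$ the existence of a single admissible monomial already exhibits $12\rem n{\infty}$ as a genuine non-negative weight. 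I expect no real obstacle here; the whole content of the lemma is the congruence computation that pins down $\rem n3$ and $\rem n2$ as the minimal exponents forced by the weight.
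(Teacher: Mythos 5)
Your proof is correct and follows essentially the same route as the paper's: expand $f\in\sof_{2n}$ in monomials $E_{4}^{a}E_{6}^{b}$ using $\sof=\mathbb{C}\!\left[E_{4},E_{6}\right]$, read off the congruences $\Mod a{-n}3$ and $\Mod b{-n}2$ from $2a+3b=n$, conclude $a\geq\rem n3$, $b\geq\rem n2$, and match weights via \eqref{eq:ninfdef}. Your extra remarks (explicitly inverting $2$ modulo $3$, checking that the quotient weight is $12\rem n{\infty}$, and noting the vacuous cases such as weight $2$ where $\rem n{\infty}<0$) are sound refinements of the same argument, not a different approach.
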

\begin{proof}
Since $\sof\!=\!\mathbb{C}\!\left[E_{4},E_{6}\right]$, any $f\!\in\!\sof_{2n}$
may be written as 
\[
f=\sum_{a,b\geq0}\!\mathit{f}\!\left(a,b\right)E_{4}^{a}E_{6}^{b}
\]
for suitable coefficients $f\!\left(a,b\right)\!\in\!\mathbb{C}$
that vanish unless $2a\!+\!3b\!=\! n$, taking into account the relevant
weights. This last condition can be satisfied only if $\Mod a{\rem n3}3$
and $\Mod b{\rem n2}2$. Now, $0\!\leq\!\rem nk\!<\! k$ for $k\!>\!0$,
and because both exponents $a$ and $b$ are non-negative integers,
it does follow that $a\!\geq\!\rem n3$ and $b\!\geq\!\rem n2$, leading
to the conclusion that both $E_{4}^{\rem n3}$ and $E_{6}^{\rem n2}$
divide $f$. Since $f$ has weight $2n$, the assertion follows.
\end{proof}
Let's now turn to weakly holomorphic forms. According to a classic
result \cite{apostol2,Knopp1970,Lang}, the ring $\FC_{0}$ of scalar
weakly-holomorphic forms of weight $0$ is a univariate polynomial
algebra generated by the Hauptmodul 
\begin{equation}
J\!\left(q\right)\!=\!\dfrac{E_{4}\!\left(q\right)^{3}}{\Delta\!\left(q\right)}-744\!=\! q^{-1}\!+\!196884q\!+\cdots\,,\label{eq:Jdef}
\end{equation}
i.e. $\FC_{0}\!=\!\mathbb{C}\!\left[J\right]$. 
\begin{lem}
\label{lem:scalargen}For an integer $n$, the module $\FC_{2n}$
of weakly holomorphic scalar forms of weight $2n$ is generated over
$\FC_{0}$ by the form 
\begin{equation}
\FE n\!\left(q\right)=E_{4}\!\left(q\right)^{\rem n3}E_{6}\!\left(q\right)^{\rem n2}\Delta\!\left(q\right)^{\rem n{\infty}}\,\,,\label{eq:dnfactdef}
\end{equation}
i.e. 
\begin{equation}
\FC_{2n}\!=\!\FE n\FC_{0}\,\,.\label{eq:scalargen}
\end{equation}
\end{lem}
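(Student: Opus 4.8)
The plan is to establish the asserted equality $\FC_{2n}=\FE n\FC_0$ by proving the two inclusions separately, the inclusion $\FE n\FC_0\subseteq\FC_{2n}$ being the straightforward one. First I would check that $\FE n$ indeed belongs to $\FC_{2n}$: its weight is $4\rem n3+6\rem n2+12\rem n\infty=2n$ by the very definition \eqref{eq:ninfdef} of $\rem n\infty$, and it is weakly holomorphic because $E_{4},E_{6}$ are holomorphic while $\Delta$ and its reciprocal $\Delta^{-1}$ are both weakly holomorphic (so any integer power $\Delta^{\rem n\infty}$ lies in $\FC$). One must also confirm that $\rem n\infty$ is an integer, which follows from the congruences $\rem n3\equiv-n\pmod3$ and $\rem n2\equiv-n\pmod2$ upon reducing the numerator $n-3\rem n2-2\rem n3$ of \eqref{eq:ninfdef} modulo $2$ and modulo $3$. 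Granting $\FE n\in\FC_{2n}$, the inclusion $\FE n\FC_0\subseteq\FC_{2n}$ is immediate, since $\FC$ is a graded ring and hence closed under products.

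For the reverse inclusion $\FC_{2n}\subseteq\FE n\FC_0$ I would reduce to the holomorphic case already settled in Lemma~\ref{lem:division}. Given $f\in\FC_{2n}$, its only singularity is a finite-order pole as $\tau\rightarrow\mathsf{i}\infty$, so for $N$ large enough the form $\Delta^{N}f$ is holomorphic at the cusp; being holomorphic on $\uhp$ as well, it lies in $\sof_{2(n+6N)}$. Setting $m=n+6N\geq0$ and applying Lemma~\ref{lem:division}, I would use the identities $\rem m3=\rem n3$, $\rem m2=\rem n2$ and $\rem m\infty=\rem n\infty+N$ (each a consequence of $6N$ being divisible by both $2$ and $3$) to write $\Delta^{N}f=E_{4}^{\rem n3}E_{6}^{\rem n2}h$ with $h\in\sof_{12(\rem n\infty+N)}$. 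Dividing through by $\Delta^{N}$ then yields $f=\FE n\cdot g$ with $g=\Delta^{-(N+\rem n\infty)}h$ a weakly holomorphic form of weight $0$, hence $g\in\FC_0=\mathbb{C}[J]$; this gives $\FC_{2n}\subseteq\FE n\FC_0$ and completes the proof.

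The decisive step, and the one I expect to be the only real obstacle, is this passage to the holomorphic setting: one must see that multiplying by a high enough power of $\Delta$ turns an arbitrary weakly holomorphic $f$ into a genuine holomorphic form to which Lemma~\ref{lem:division} applies, and that this device works uniformly for every integer $n$, negative $n$ included, by taking $N$ sufficiently large. The remaining work is the bookkeeping of the remainder identities, in particular $\rem m\infty=\rem n\infty+N$, which I would verify directly from \eqref{eq:ninfdef}. Should one prefer to avoid Lemma~\ref{lem:division}, the same conclusion can be reached by showing directly that $f/\FE n$ is weakly holomorphic: since $\Delta$ is zero-free on $\uhp$, the only possible poles of $f/\FE n$ sit at the elliptic points $\tau=\mathsf{i}$ and $\tau=\rho=\exp(2\pi\mathsf{i}/3)$, and the standard order congruences $\mathrm{ord}_{\rho}(f)\equiv-n\pmod3$ and $\mathrm{ord}_{\mathsf{i}}(f)\equiv-n\pmod2$ force $\mathrm{ord}_{\rho}(f)\geq\rem n3$ and $\mathrm{ord}_{\mathsf{i}}(f)\geq\rem n2$, cancelling exactly the zeros of $E_{4}^{\rem n3}E_{6}^{\rem n2}$.
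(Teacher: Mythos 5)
Your proposal is correct and follows essentially the same route as the paper's own proof: clear the pole by multiplying with a suitable power of $\Delta$ (the paper uses the exact valence $k$, you use any sufficiently large $N$), apply Lemma~\ref{lem:division} to the resulting holomorphic form, and divide the power of $\Delta$ back out to land in $\FE n\FC_{0}$. The extra bookkeeping you supply (the easy inclusion $\FE n\FC_{0}\subseteq\FC_{2n}$, integrality of $\rem n{\infty}$, and the remainder identities $\rem m3=\rem n3$, $\rem m2=\rem n2$, $\rem m{\infty}=\rem n{\infty}+N$) is left implicit in the paper but is exactly what makes its argument go through.
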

\begin{proof}
Let $g$ denote a weakly holomorphic scalar form of weight $2n$,
and let $k\!\geq\!0$ denote its valence, i.e. the order of its pole
at $q\!=\!0$. Then the product $f\!=\!\Delta^{k}g$ is a holomorphic
scalar form of weight $12k\!+\!2n$, and \prettyref{lem:division}
applies, i.e. there exists is a holomorphic form $F\!\in\!\sof_{12\left(k+\rem n{\infty}\right)}$
such that 
\[
f\!=\! E_{4}^{\rem n3}E_{6}^{\rem n2}F\:,
\]
and thus 
\[
g\!=\!\Delta^{-k}E_{4}^{\rem n3}E_{6}^{\rem n2}F\!=\!\FE n\Delta^{-\rem n{\infty}-k}F\,.
\]
But the product $\Delta^{-\rem n{\infty}-k}F$ is a weight $0$ weakly
holomorphic scalar form, proving the assertion.
\end{proof}
As a consequence 
\[
\FC\!=\!\mathbb{C}\left[J,\FE 1\right]\,,
\]
as a graded ring. Actually, the forms $\FE n\!\left(\tau\right)\!\in\!\FC_{2n}$
are holomorphic for $n\!>\!1$, and satisfy $\FE{n+6}\!\left(q\right)\!=\!\Delta\!\left(q\right)\FE n\!\left(q\right)$. 
\begin{lem}
\begin{equation}
\frac{\FE n\FE m}{\FE{n+m}}\!=\!\left(\!\frac{E_{4}^{3}}{\Delta}\!\right)^{\!\!\FL nm3}\left(\!\frac{E_{6}^{2}}{\Delta}\!\right)^{\!\!\FL nm2}\!=\!\left(J\!+\!744\right)^{\FL nm3}\!\left(J\!-\!984\right)^{\FL nm2}\:,\label{eq:fnmult}
\end{equation}
where 
\begin{equation}
\FL nm{k\!}=\!\begin{cases}
1 & \mathrm{if}\:\ensuremath{\rem nk+\rem mk}\geq k,\\
0 & \mathrm{otherwise}\:.
\end{cases}\label{eq:sabdef}
\end{equation}
\end{lem}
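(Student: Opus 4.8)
The plan is to substitute the explicit factorization \eqref{eq:dnfactdef} into the left-hand side and reduce everything to the three exponents of $E_4$, $E_6$ and $\Delta$. Carrying out the substitution gives
\[
\frac{\FE n\FE m}{\FE{n+m}}=E_4^{\rem n3+\rem m3-\rem{(n+m)}3}\,E_6^{\rem n2+\rem m2-\rem{(n+m)}2}\,\Delta^{\rem n{\infty}+\rem m{\infty}-\rem{(n+m)}{\infty}}\,,
\]
so the whole statement comes down to evaluating these three exponents.

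The key arithmetic observation concerns the integer combinations $\rem nk+\rem mk-\rem{(n+m)}k$ for $k>0$. Since each of $\rem nk$, $\rem mk$, $\rem{(n+m)}k$ is congruent mod $k$ to $-n$, $-m$, $-(n+m)$ respectively, the combination is divisible by $k$; and because each remainder lies in $[0,k)$, the combination lies strictly between $-k$ and $2k$, hence equals either $0$ or $k$. It equals $k$ precisely when $\rem nk+\rem mk\ge k$, that is, the combination is exactly $k\,\FL nmk$ by \eqref{eq:sabdef}. Specializing to $k=3$ and $k=2$ identifies the $E_4$-exponent as $3\,\FL nm3$ and the $E_6$-exponent as $2\,\FL nm2$.

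For the $\Delta$-exponent I would feed the explicit formula \eqref{eq:ninfdef} into the combination $\rem n{\infty}+\rem m{\infty}-\rem{(n+m)}{\infty}$. The linear terms $n/6$ telescope to zero, and the two remaining pieces are precisely $-\tfrac12\bigl(\rem n2+\rem m2-\rem{(n+m)}2\bigr)-\tfrac13\bigl(\rem n3+\rem m3-\rem{(n+m)}3\bigr)$, which by the previous paragraph equals $-\FL nm2-\FL nm3$. Assembling the three exponents then yields $E_4^{3\FL nm3}E_6^{2\FL nm2}\Delta^{-\FL nm2-\FL nm3}$; redistributing one factor of $\Delta^{-1}$ to each of the $E_4^{3}$ and $E_6^{2}$ blocks gives the first claimed form $\bigl(E_4^3/\Delta\bigr)^{\FL nm3}\bigl(E_6^2/\Delta\bigr)^{\FL nm2}$.

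The second equality is then a bookkeeping step using the definitions of $J$ and $\Delta$. Equation \eqref{eq:Jdef} gives $E_4^3/\Delta=J+744$ directly, while \eqref{eq:deltadef} rearranges to $E_6^2=E_4^3-1728\,\Delta$, whence $E_6^2/\Delta=(J+744)-1728=J-984$. I do not expect any real obstacle here; the one point that must be handled with care is the sign bookkeeping in the remainder identity, in particular ruling out the values $-k$ and $2k$, which is exactly what the bounds $0\le\rem nk<k$ guarantee.
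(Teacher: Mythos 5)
Your proposal is correct and follows essentially the same route as the paper's proof: substitute the factorization \eqref{eq:dnfactdef}, identify the exponents of $E_{4}$, $E_{6}$ and $\Delta$, and invoke the remainder identity $\rem nk+\rem mk-\rem{\left(n+m\right)}k=k\FL nmk$ for $k>0$ (the paper's Eq.~\eqref{eq:remadd}), handling the $\Delta$-exponent via \eqref{eq:ninfdef} exactly as the paper does. The only difference is that you spell out the divisibility-and-bounds justification of \eqref{eq:remadd} and the identities $E_{4}^{3}/\Delta=J+744$, $E_{6}^{2}/\Delta=J-984$, details which the paper leaves implicit.
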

\begin{proof}
From the definition Eq.\prettyref{eq:dnfactdef}, 
\[
\frac{\FE n\FE m}{\FE{n+m}}=E_{4}^{\rem n3+\rem m3-\rem{\left(n+m\right)}3}E_{6}^{\rem n2+\rem m2-\rem{\left(n+m\right)}2}\Delta^{\rem n{\infty}+\rem m{\infty}-\rem{\left(n+m\right)}{\infty}}\:.
\]
But 
\[
\rem n{\infty}\!+\!\rem m{\infty}\!-\!\rem{\left(n\!+\! m\right)}{\infty}\!=\!\frac{\rem{\left(n\!+\! m\right)}2\!-\!\rem n2\!-\!\rem m2}{2}+\frac{\rem{\left(n\!+\! m\right)}3\!-\!\rem n3\!-\!\rem m3}{3}\,,
\]
so the result follows by taking into account that, for $k>0$ 
\begin{equation}
\rem nk+\rem mk-\rem{\left(n+m\right)}k\!=\! k\FL nmk\:.\label{eq:remadd}
\end{equation}

\end{proof}
We conclude this section with the following counting result: 
\begin{lem}
\label{lem:counting}Let $X$ denote a finite subset of $\mathbb{Z}$,
$k$ a positive integer and $0\!<\! p\!<\! k$. Then the number of
elements $x\!\in\! X$ congruent to $p$ modulo $k$ is given by 
\[
\left|\set{x\!\in\! X}{x\!\in\! k\mathbb{Z}\!+\! p}\right|\!=\!\sum_{x\in X}\FL p{-x}k-\sum_{x\in X}\FL{p\!+\!1}{-x}k\:.
\]
\end{lem}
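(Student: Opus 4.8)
The plan is to prove the identity termwise: I would show that for each fixed $x\in X$ the difference $\FL p{-x}k-\FL{p\!+\!1}{-x}k$ equals the indicator of the congruence $\Mod xpk$, and then sum over $x\in X$. Everything comes down to unwinding the definition \eqref{eq:sabdef} of $\mathfrak{s}_k$ together with the convention that $\rem nk$ denotes the remainder of $-n$ modulo $k$.

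First I would record the relevant residues. Since $0\!<\! p\!<\! k$, the remainder of $-p$ modulo $k$ is $\rem pk\!=\! k\!-\! p$. Writing $r\in\left\{ 0,\dots,k\!-\!1\right\}$ for the remainder of $x$ modulo $k$, one has $r\!=\!\rem{\left(-x\right)}k$, and $\Mod xpk$ holds exactly when $r\!=\! p$. Feeding these into \eqref{eq:sabdef} shows that $\FL p{-x}k\!=\!1$ precisely when $\rem pk\!+\! r\!=\!\left(k\!-\! p\right)\!+\! r\!\geq\! k$, i.e. when $r\!\geq\! p$, and $\FL p{-x}k\!=\!0$ otherwise.

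Next I would handle the shifted term $\FL{p\!+\!1}{-x}k$ in the same manner, taking care at the upper boundary. For $p\!<\! k\!-\!1$ one has $\rem{\left(p+1\right)}k\!=\! k\!-\! p\!-\!1$, so $\FL{p\!+\!1}{-x}k\!=\!1$ iff $r\!\geq\! p\!+\!1$; in the remaining case $p\!=\! k\!-\!1$ one instead gets $\rem{\left(p+1\right)}k\!=\!\rem kk\!=\!0$, whence $\FL{p\!+\!1}{-x}k\!=\!0$ for every $x$, which again matches the (now vacuous) condition $r\!\geq\! p\!+\!1$. So in all cases $\FL{p\!+\!1}{-x}k$ is the indicator of $r\!\geq\! p\!+\!1$. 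Subtracting the two indicators, $\FL p{-x}k-\FL{p\!+\!1}{-x}k$ equals $1$ when $r\!=\! p$ and $0$ otherwise; that is, it is the indicator of the congruence $\Mod xpk$.

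Finally, summing this pointwise identity over all $x\in X$ and separating the result into the two sums displayed in the statement gives $\left|\set{x\!\in\! X}{x\!\in\! k\mathbb{Z}\!+\! p}\right|$, as required. The only subtle point is the boundary value $p\!=\! k\!-\!1$, where the naive expression $k\!-\!\left(p\!+\!1\right)$ for $\rem{\left(p+1\right)}k$ must be replaced by $0$; I expect this to be the one step worth spelling out, since the rest is a direct substitution into the definition of $\mathfrak{s}_k$.
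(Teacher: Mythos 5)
Your proof is correct and takes essentially the same approach as the paper's: both unwind the definition of $\FL nmk$ via the observation that $\rem pk\!=\! k\!-\! p$ for $0\!<\! p\!<\! k$, turning $\FL p{-x}k$ into the indicator of $\rem{\left(-x\right)}k\!\geq\! p$; you simply argue termwise where the paper compares cardinalities of sets. Your separate treatment of the boundary case $p\!=\! k\!-\!1$ is a harmless extra precaution that the paper leaves implicit (and which in fact requires no case split, since $k\!-\!\left(p\!+\!1\right)\!=\!0\!=\!\rem{\left(p+1\right)}k$ there, so the same formula applies).
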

\begin{proof}
First, let's note that $x\!\in\! k\mathbb{Z}\!+\! p$ is equivalent
to $\rem{\left(-x\right)}k\!=\! p$. From Eq.\eqref{eq:sabdef}, 
\[
\sum_{x\in X}\FL n{-x}k\!=\!\left|\set{x\!\in\! X}{\rem nk\!+\!\rem{\left(-x\right)}k\!\geq\! k}\right|
\]
for any $n\!\in\!\mathbb{Z}$; but for $0\!<\! p\!<\! k$ one has
$\rem pk\!=\! k\!-\! p$, hence 
\[
\sum_{x\in X}\FL p{-x}k\!=\!\left|\set{x\!\in\! X}{\rem{\left(-x\right)}k\!\geq\! p}\right|\:.
\]
It follows that 
\[
\sum_{x\in X}\left(\FL p{-x}k-\FL{p\!+\!1}{-x}k\right)\!=\!\left|\set{x\!\in\! X}{\rem{\left(-x\right)}k\!=\! p}\right|\:.
\]

\end{proof}

\section{Vector-valued modular forms\label{sec:general}}

Let $\rho\!:\!\FD\!\rightarrow\!\gl{}V$ denote a representation of
$\FD\!=\!\SL$ on the finite dimensional linear space $V$, and let
$n$ be an integer. A vector-valued modular form of weight $n$ with
multiplier $\rho$ is a map $\FA\!:\!\uhp\!\rightarrow\! V$ that
is holomorphic everywhere in the upper half-plane $\uhp\!=\!\set{\tau}{\mathrm{Im}\tau\!>\!0}$,
and transforms according to the rule
\begin{equation}
\FA\!\left(\frac{a\tau+b}{c\tau+d}\right)=\left(c\tau+d\right)^{n}\mr abcd\FA\!\left(\tau\right)\:\label{eq:modtrans}
\end{equation}
for $\sm abcd\!\in\!\FD$. One recovers the classical notion of scalar
modular forms when $\rho\!=\!\rho_{0}$ is the trivial (identity)
representation.

As in the scalar case, a form is weakly holomorphic if it has at worst
finite order poles in the limit $\tau\!\rightarrow\!\mathsf{i}\infty$,
i.e. its Puisseux-expansion in terms of the local uniformizing parameter
$q\!=\!\exp\!\left(2\pi\mathsf{i}\tau\right)$ contains only finitely
many negative powers of $q$; it is holomorphic, respectively a cusp
form if it is bounded (resp. vanishes) as $\tau\!\rightarrow\!\mathsf{i}\infty$,
meaning that its Puisseux-expansion contains only non-negative (resp.
positive) powers of $q$. We'll denote by $\wof n{\rho}$ the linear
space of weakly holomorphic forms of weight $n$, and by $\mof n{\rho}$
and $\cof n{\rho}$ the subspaces of holomorphic and cusp forms; clearly,
we have the inclusions $\cof n{\rho}\!<\!\mof n{\rho}\!<\!\wof n{\rho}$.
An obvious but important observation is that 
\begin{equation}
\wof n{\rho_{1}\!\oplus\!\rho_{2}}=\wof n{\rho_{1}}\oplus\wof n{\rho_{2}}\:\label{eq:decomp}
\end{equation}
for any two representations $\rho_{1}$ and $\rho_{2}$, and a similar
decomposition holds for the spaces of holomorphic and cusp forms,
allowing to reduce the general theory to the case of indecomposable
representations.

We'll call a representation $\rho\!:\!\FD\!\rightarrow\!\gl{}V$ even
in case $\rho\!\sm{\textrm{-}1}00{\textrm{-}1}\!=\!\id V$, and odd
if $\rho\!\sm{\textrm{-}1}00{\textrm{-}1}\!=\!-\id V$. Any representation
$\rho$ may be decomposed uniquely into a direct sum $\rho\!=\!\rho_{+}\!\oplus\!\rho_{-}$
of even and odd representations, and any indecomposable (in particular,
any irreducible) representation is either even or odd. Combining this
result with Eq.\eqref{eq:decomp}, one gets that it is enough to treat
separately purely even and odd representations, as the general case
can be reduced to these. Note that it follows from Eq.\eqref{eq:modtrans}
that for an even (resp. odd) representation $\rho$ there are no nontrivial
forms of odd (resp. even) weight.

Since the product of a weakly-holomorphic form with a scalar form
$f\!\left(\tau\right)\!\in\!\FC$ is again weakly-holomorphic, it
does follow that the direct sum
\begin{equation}
\wof{}{\rho}=\bigoplus_{n\in\mathbb{Z}}\wof n{\rho}\label{eq:wofdef}
\end{equation}
is a graded module over $\FC$. Similarly, since multiplying a holomorphic
form $\FA\!\left(\tau\right)\!\in\!\mof n{\rho}$ with a scalar holomorphic
form $f\!\left(\tau\right)\!\in\!\sof_{2k}$ results in a new holomorphic
form $f\!\left(\tau\right)\!\FA\!\left(\tau\right)\!\in\!\mof{n+2k}{\rho}$,
and the same is true for cusp forms, the direct sums $\mof{}{\rho}\!=\!\oplus_{n}\mof n{\rho}$
and $\cof{}{\rho}\!=\!\oplus_{n}\cof n{\rho}$ are (graded) modules
over the ring $\mathsf{M}$ of holomorphic scalar modular forms. An
important result of Marks and Mason \cite{Marks2009} states that
$\mof{}{\rho}$ is a free module of rank $d\!=\!\dim\rho$ for a broad
class of representations $\rho$. An interesting question in this
respect is to determine the distribution of the fundamental weights
(i.e. the weights of a set of free generators), which may be answered
by considering the Hilbert-Poincar� series $\Gmof{\rho}z\!=\!\sum_{n}\dim\mof n{\rho}z^{n}$
of this module \cite{Eisenbud}: the number of independent free generators
of weight $k$ equals the coefficient of $z^{k}$ in the Hilbert polynomial
$\hilb{\rho}z\!=\!\left(1\!-\! z^{4}\right)\!\left(1\!-\! z^{6}\right)\Gmof{\rho}z$.

Because the discriminant form $\Delta\!\left(\tau\right)$ does not
vanish on the upper half-plane \cite{apostol2}, its 12th root $\un\!=\! q{}^{\nicefrac{1}{12}}\prod_{n=1}\left(1\!-\! q^{n}\right)^{{\scriptscriptstyle 2}}$
(the square of Dedekind's eta function) is well-defined and holomorphic
on $\uhp$, with an algebraic branch point at the cusp $\tau\!=\!\mathsf{i}\infty$.
Moreover, $\un$ is a weight 1 cusp form with multiplier $\kan$,
where $\kan$ denotes the one dimensional representation of $\SL$
for which%
\footnote{$\kan$ generates the group of linear characters of $\SL$, which
is cyclic of order 12; moreover, $\kan$ is an odd representation,
and tensoring with $\kan$ takes an even representation into an odd
one and \emph{vice versa.} %
}

\begin{equation}
\begin{aligned}\kan\!\sm 0{\textrm{-}1}1{\,0} & \!=-\mathsf{i}\\
\kan\!\sm{\,0}{\,\textrm{-}1}{\,1}{\,\textrm{-}1} & \!=\exp\!\left(\!\dfrac{4\pi\mathsf{i}}{3}\right)\:.
\end{aligned}
\label{eq:canonical-1}
\end{equation}
It does follow that, for any representation $\rho$ and any form $\FA\!\in\!\wof n{\rho}$,
one has $\un^{k}\FA\!\left(\tau\right)\!\in\!\wof{n+k}{\rho\!\otimes\!\kan^{k}}$
for all integers\emph{ }$k\!\in\!\mathbb{Z}$; in other words, one
has a graded bijective map
\begin{align}
\ws_{k}:\wof{}{\rho}\rightarrow & \wof{}{\rho\!\otimes\!\kan^{k}}\label{eq:wsdef}\\
\FA\!\left(\tau\right)\mapsto & \un^{k}\FA\!\left(\tau\right)\,.\nonumber 
\end{align}
The map $\ws_{k}$ relates forms of different weights with a slightly
different multiplier. Note that, since $\un\!\in\!\cof 1{\kan}$ is
a cusp form, multiplication by a positive power of $\un$ takes a
holomorphic form into a cusp form, i.e. $\ws_{k}\!\left(\mof n{\rho}\right)\!<\!\cof{n+k}{\rho\!\otimes\!\kan^{k}}$
for $k\!>\!0$. The bijectivity of the weight-shifting map $\ws_{k}$
allows to reduce to a great extent the study of forms of arbitrary
weights to that of forms of weight $0$ (for a slightly different
representation).

Let's now turn our attention to the properties of $\wof n{\rho}$:
recall that, according to the parity of $\rho$, $n$ has to be even
or odd for this space to be non-trivial. The basic observation is
that the product $f\!\left(\tau\right)\FA\!\left(\tau\right)$ of
a weakly holomorphic form $\FA\!\left(\tau\right)\!\in\!\wof n{\rho}$
with a weakly holomorphic scalar form $f\!\left(\tau\right)\!\in\!\FC_{0}$
of weight $0$ is again a weakly holomorphic form belonging to $\wof n{\rho}$:
in other words, $\wof n{\rho}$ is an $\FC_{0}$-module, that can
be shown to be torsion free. Taking into account the fact that $\FC_{0}$
is the univariate polynomial algebra $\FF$ generated by the Hauptmodul,
this means that actually $\wof n{\rho}$ is a free module \cite{Eisenbud},
whose rank may be shown to equal the dimension $d$ of the representation
$\rho$. This means that there exists forms $\FA_{1},\ldots,\FA_{d}\!\in\!\wof n{\rho}$
that freely generate $\wof n{\rho}$ as an $\FC_{0}$-module, i.e.
any weakly holomorphic form $\FA\!\in\!\wof n{\rho}$ may be decomposed
uniquely into a sum 
\begin{equation}
\FA\!\left(\tau\right)\!=\!\sum_{i=1}^{d}\wp_{i}\!\left(\tau\right)\FA_{i}\!\left(\tau\right)\,,\label{eq:xdecomp}
\end{equation}
where the coefficients $\wp_{1},\ldots,\wp_{d}\!\in\!\FC_{0}$ are
weight $0$ weakly holomorphic scalar forms, i.e. univariate polynomials
in the Hauptmodul $J\!\left(\tau\right)$. 

Given a free generating set $\FA_{1},\ldots,\FA_{d}$ of $\wof n{\rho}$
over $\FC_{0}$, the exterior product $\FA_{1}\wedge\FA_{2}\wedge\ldots\wedge\FA_{d}$
(the determinant of the matrix whose columns are the $\FA_{i}$) is
clearly a weakly holomorphic form of weight $nd$ transforming according
to the one-dimensional determinant representation $\wedge^{d}\rho$
(the $d$-th exterior power of $\rho$): let $\detxi n\!\left(\rho\right)$
denote the quotient of this exterior product by the coefficient of
the lowest power of $q$ in its $q$-expansion. Since the exterior
products of different freely generating sets are proportional, it
follows that $\detxi n\!\left(\rho\right)$ is a well-defined element
of $\wof{nd}{\wedge^{d}\rho}$. 
\begin{lem}
For each integer $n$ 
\begin{equation}
\detxi n\!\left(\rho\right)\!=\!\un^{nd}\detxi 0\!\left(\rho\!\otimes\!\kan^{\mbox{-}n}\right)\:.\label{eq:detform3}
\end{equation}
\end{lem}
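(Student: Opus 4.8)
The plan is to realize both determinants through a single free generating set related by the weight-shifting map $\ws_{-n}$ of \eqref{eq:wsdef}. First I would note that $\ws_{-n}$, being multiplication by the scalar $\un^{-n}$, is an isomorphism of $\FC_{0}$-modules from $\wof n{\rho}$ onto $\wof 0{\rho\otimes\kan^{\mbox{-}n}}$: it is $\FC_{0}$-linear because the scalar factor $\un^{-n}$ commutes with any weight-$0$ scalar form, and it is bijective because $\un$ is nowhere vanishing on $\uhp$, so that $\un^{-n}$ is a genuine weakly holomorphic form of weight $-n$ with multiplier $\kan^{\mbox{-}n}$. Consequently, if $\FA_{1},\ldots,\FA_{d}$ freely generate $\wof n{\rho}$ over $\FC_{0}$, then $\un^{-n}\FA_{1},\ldots,\un^{-n}\FA_{d}$ freely generate $\wof 0{\rho\otimes\kan^{\mbox{-}n}}$.

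Next I would track the effect on the exterior product. Since multiplying each of the $d$ columns by the common scalar $\un^{-n}$ multiplies their determinant by $\un^{-nd}$, one has
\[
\left(\un^{-n}\FA_{1}\right)\wedge\cdots\wedge\left(\un^{-n}\FA_{d}\right)=\un^{-nd}\left(\FA_{1}\wedge\cdots\wedge\FA_{d}\right)\,.
\]
Writing $W$ for $\FA_{1}\wedge\cdots\wedge\FA_{d}$, this exhibits the unnormalized determinant attached to the chosen generating set of $\wof 0{\rho\otimes\kan^{\mbox{-}n}}$ as $\un^{-nd}W$; because $\detxi 0$ and $\detxi n$ are independent of the generating sets used to define them, I am free to compute them from these two matched sets.

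Finally I would reconcile the normalizations. The form $\detxi n\!\left(\rho\right)$ is $W$ divided by the coefficient $c$ of the lowest power of $q$ in its Puiseux expansion, while $\detxi 0\!\left(\rho\otimes\kan^{\mbox{-}n}\right)$ is $\un^{-nd}W$ divided by the corresponding leading coefficient. Because $\un=q^{\nicefrac{1}{12}}\prod_{n}(1-q^{n})^{2}$ has leading $q$-coefficient equal to $1$, so does $\un^{-nd}$; hence multiplying $W$ by $\un^{-nd}$ only shifts the lowest exponent by $-\nicefrac{nd}{12}$ and leaves the leading coefficient equal to $c$. Thus $\detxi 0\!\left(\rho\otimes\kan^{\mbox{-}n}\right)=\un^{-nd}W/c=\un^{-nd}\detxi n\!\left(\rho\right)$, which rearranges to \eqref{eq:detform3}. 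The only delicate point is this last bookkeeping step: the identity relies on $\un$ being normalized to leading coefficient $1$, for otherwise an extraneous scalar would survive; everything else is formal manipulation with the $\FC_{0}$-module structure and the multiplicativity of the determinant.
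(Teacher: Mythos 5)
Your proposal is correct and follows essentially the same route as the paper's own proof: transport a free generating set of $\wof n{\rho}$ through the weight-shifting map $\ws_{-n}$, observe that the exterior product picks up the factor $\un^{-nd}$, and match the two normalized determinants by noting that $\un$ (hence $\un^{-nd}$) has leading $q$-coefficient $1$. You merely spell out the normalization bookkeeping that the paper compresses into the remark that ``the leading coefficient of both expressions is $1$.''
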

\begin{proof}
Since the weight-shifting map $\ws_{-n}$ is bijective, given a free
generating set $\FA_{1},\ldots,\FA_{d}$ of $\wof n{\rho}$, the set
$\ws_{-n}\!\left(\FA_{1}\right),\ldots,\ws_{-n}\!\left(\FA_{d}\right)$
freely generates $\wof 0{\rho\!\otimes\!\kan^{\mbox{-}n}}$, hence
its exterior product is proportional to both $\detxi 0\!\left(\rho\!\otimes\!\kan^{\mbox{-}n}\right)$
and to $\un^{-nd}\detxi n\!\left(\rho\right)$; since the leading
coefficient of both expressions is $1$, the two expressions should
be equal. 
\end{proof}
To conclude this section, we cite the following result from \cite{Bantay2008}.
\begin{prop*}
For an even representation \textup{$\rho\!:\!\FD\!\rightarrow\!\gl{}V$}\textup{\emph{,
one has}}\textup{ 
\begin{equation}
\detxi 0\!\left(\rho\right)=\left(\frac{E_{4}\!\left(q\right)}{\updelta\!\left(q\right)^{4}}\right)^{\beta_{1}+2\beta_{2}}\left(\frac{E_{6}\!\left(q\right)}{\updelta\!\left(q\right)^{6}}\right)^{\alpha}\:,\label{eq:detformula}
\end{equation}
}\textup{\emph{where}} $\alpha$ denotes the multiplicity of $-1$
as an eigenvalue of $\rho\!\sm 0{\textrm{-}1}10$, while $\beta_{1}$
and $\beta_{2}$ denote the multiplicities of $\exp\!\left(\frac{2\pi\mathsf{i}}{3}\right)$
and $\exp\!\left(\frac{4\pi\mathsf{i}}{3}\right)$ as eigenvalues
of $\rho\!\sm 0{\textrm{-}1}1{\textrm{-}1}$.
\end{prop*}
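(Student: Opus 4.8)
The plan is to pin down $\detxi 0\!\left(\rho\right)$ by its divisor on the upper half-plane together with its multiplier, and then to recognise the right-hand side of \eqref{eq:detformula} as the unique weight-$0$ form carrying that data. First I would record that, by construction, $\detxi 0\!\left(\rho\right)$ is a weight-$0$ weakly holomorphic scalar form transforming with the one-dimensional determinant multiplier $\wedge^{d}\rho$, and that its divisor on $\uhp$ is an invariant of the module $\wof 0{\rho}$: any two free generating sets are related by an element of $\mathrm{GL}_{d}\!\left(\FF\right)$ with $d\!=\!\dim\rho$, whose determinant is a unit of $\FF\!=\!\mathbb{C}\!\left[J\right]$, hence a nonzero constant, so the exterior product is well-defined up to scale and the normalisation to leading coefficient $1$ makes it canonical. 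Since the free generators are holomorphic on $\uhp$, so is $\detxi 0\!\left(\rho\right)$, and its order at every point of $\uhp$ is non-negative.

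The heart of the argument is a local computation of this divisor at the two elliptic orbits. For the order-$2$ point $\tau\!=\!\mathsf{i}$, fixed by $S\!=\!\sm 0{\textrm{-}1}10$, I would pass to the local coordinate $w\!=\!\frac{\tau-\mathsf{i}}{\tau+\mathsf{i}}$, on which $S$ acts by $w\!\mapsto\!-w$; decomposing $V$ into eigenspaces of $\rho(S)$ (eigenvalues $\pm1$, since $\rho$ is even) and imposing \eqref{eq:modtrans} at weight $0$ shows that a $(+1)$-component is an even power series in $w$ while a $(-1)$-component is odd. In a local frame adapted to this splitting the matrix of generators is, to leading order, diagonal with exactly the $\alpha$ entries attached to the eigenvalue $-1$ vanishing to first order, so $\detxi 0\!\left(\rho\right)$ vanishes at $\mathsf{i}$ to order $\alpha$. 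For the order-$3$ point, fixed by $U\!=\!\sm 0{\textrm{-}1}1{\textrm{-}1}$ at $\tau_{0}\!=\!\exp\!\left(\nicefrac{\mathsf{i}\pi}{3}\right)$, the analogous coordinate transforms by $w\!\mapsto\!U'\!\left(\tau_{0}\right)w\!=\!\exp\!\left(\nicefrac{2\pi\mathsf{i}}{3}\right)w$, whence the $\exp\!\left(\nicefrac{2\pi\mathsf{i}}{3}\right)$-eigencomponents of $\rho(U)$ vanish to order $1$ and the $\exp\!\left(\nicefrac{4\pi\mathsf{i}}{3}\right)$-eigencomponents to order $2$, giving total order $\beta_{1}+2\beta_{2}$. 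Because a global $\FF$-basis localises to a frame of the module over each analytic local ring, and two frames differ by a matrix with unit determinant, the order computed from the adapted frame is exactly that of $\detxi 0\!\left(\rho\right)$; at every non-elliptic point the adapted frame has nonvanishing determinant, so there are no further zeros on $\uhp$.

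It then remains to match this against the right-hand side $\Phi$ of \eqref{eq:detformula}. A short check shows $\Phi$ has weight $0$, and since $\updelta$ carries multiplier $\kan$ while $E_{4},E_{6}$ are scalar, its multiplier is $\kan^{-4\left(\beta_{1}+2\beta_{2}\right)-6\alpha}$; evaluating on $S$ and $U$ via $\kan(S)\!=\!-\mathsf{i}$ and $\kan(U)\!=\!\exp\!\left(\nicefrac{4\pi\mathsf{i}}{3}\right)$ from \eqref{eq:canonical-1} identifies this with $\wedge^{d}\rho$, whose values on these generators of $\FD$ are $\left(-1\right)^{\alpha}$ and $\exp\!\left(\nicefrac{2\pi\mathsf{i}\left(\beta_{1}+2\beta_{2}\right)}{3}\right)$. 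Using $E_{4}^{3}/\Delta\!=\!J\!+\!744$ and $E_{6}^{2}/\Delta\!=\!J\!-\!984$ one sees $\Phi$ vanishes to order $\beta_{1}+2\beta_{2}$ at $\tau_{0}$, to order $\alpha$ at $\mathsf{i}$, and nowhere else on $\uhp$, i.e. it has the same divisor and multiplier as $\detxi 0\!\left(\rho\right)$. Consequently their ratio is an $\FD$-invariant function, holomorphic and nonvanishing on $\uhp$ with at worst a pole at the cusp, hence a polynomial in $J$ with no finite zeros, i.e. a constant; comparing the leading $q$-coefficients (both normalised to $1$) forces this constant to be $1$. I expect the main obstacle to be the local elliptic-point analysis of the second step: choosing the uniformisers and computing the elliptic multipliers $w\!\mapsto\!-w$ and $w\!\mapsto\!\exp\!\left(\nicefrac{2\pi\mathsf{i}}{3}\right)w$ correctly, checking that the eigenvalue labelling reproduces exactly the stated $\beta_{1},\beta_{2}$, and justifying that the order of $\detxi 0\!\left(\rho\right)$ is faithfully read off from an adapted local frame.
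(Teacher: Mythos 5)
Two remarks before the substance. The paper never proves this Proposition: it is quoted from \cite{Bantay2008}, so there is no internal proof to compare with; in that reference the formula is extracted from the first-order Fuchsian system in the variable $J$ satisfied by the fundamental matrix of weight-$0$ weakly holomorphic forms (an Abel--Wronskian argument at the singular points $J=0$, $J=1728$, $J=\infty$). Your route --- characterise $\detxi 0\!\left(\rho\right)$ by its multiplier and its divisor on $\uhp$, then conclude by a Liouville argument --- is therefore genuinely different, and most of its ingredients are sound: the identification of the multiplier of both sides on the generators $S=\sm 0{\textrm{-}1}10$ and $U=\sm 0{\textrm{-}1}1{\textrm{-}1}$, the elliptic multipliers $w\mapsto -w$ and $w\mapsto e^{2\pi\mathsf{i}/3}w$, the eigenspace analysis giving vanishing order at least $\alpha$ at $\mathsf{i}$ and at least $\beta_{1}+2\beta_{2}$ at $e^{\pi\mathsf{i}/3}$, and the final step (an invariant, holomorphic, zero-free function on $\uhp$, meromorphic at the cusp, is a nonzero constant, which the normalisation makes equal to $1$).

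The genuine gap is the step you flag yourself: that ``a global $\FF$-basis localises to a frame of the module over each analytic local ring.'' This is not a formal property of free modules; it is precisely the assertion that the determinant vanishes to order exactly $\alpha$ at $\mathsf{i}$, exactly $\beta_{1}+2\beta_{2}$ at $e^{\pi\mathsf{i}/3}$, and nowhere else --- i.e. it is the analytic heart of the Proposition, and quoting it as known makes the argument circular. It can, however, be closed with the tools already in play, by a division argument resting on freeness of $\wof 0{\rho}$ over $\FF=\mathbb{C}\!\left[J\right]$. If the determinant of a free generating set $\FA_{1},\ldots,\FA_{d}$ vanished at a non-elliptic point $\tau_{1}$ (resp. to order $>\alpha$ at $\mathsf{i}$, resp. to order $>\beta_{1}+2\beta_{2}$ at $e^{\pi\mathsf{i}/3}$), then some nonzero constant vector $\left(c_{j}\right)$ would make $G=\sum_{j}c_{j}\FA_{j}$ vanish at $\tau_{1}$ (resp. would give each eigencomponent of $G$ at the elliptic point one more order of vanishing than equivariance already forces). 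Equivariance spreads this vanishing over the whole $\FD$-orbit, and since $J-J\!\left(\tau_{1}\right)$ has simple zeros on a non-elliptic orbit, while $J-1728$ and $J$ have double and triple zeros on the orbits of $\mathsf{i}$ and $e^{\pi\mathsf{i}/3}$ respectively, the quotient $G/\left(J-J\!\left(\tau_{1}\right)\right)$ (resp. $G/\left(J-1728\right)$, resp. $G/J$) is holomorphic on $\uhp$, still weakly holomorphic at the cusp, hence lies in $\wof 0{\rho}$. Expanding this quotient in the basis $\FA_{j}$ and invoking uniqueness of the coefficients forces each constant $c_{j}$ to be divisible in $\FF$ by a polynomial of degree one, so $c_{j}=0$ --- a contradiction. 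With this lemma supplied, your argument is complete; without it, the central claim of the proof is assumed rather than proved.
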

Note that the eigenvalue multiplicities $\alpha$, $\beta_{1}$ and
$\beta_{2}$ can be determined through the relations
\begin{equation}
\begin{aligned}\tr{\,\rho\!\sm 0{\textrm{-}1}10} & =\, d-2\alpha\:,\\
\tr{\,\rho\!\sm 0{\textrm{-}1}1{\textrm{-}1}} & =\, d-\frac{3}{2}\left(\beta_{1}+\beta_{2}\right)+\mathsf{i}\frac{\sqrt{3}}{2}\left(\beta_{1}-\beta_{2}\right)\:.
\end{aligned}
\label{eq:sigtrace}
\end{equation}

\section{The trace formula\global\long\def\rodot{\dot{\rho}}
\global\long\def\wt{k}
}

From now on, we shall assume that $\mof{}{\rho}\!=\!\oplus_{k}\mof k{\rho}$
is a free module of rank $d$ over the ring $\sof\!=\!\mathbb{C}\!\left[E_{4},E_{6}\right]$
of scalar holomorphic forms%
\footnote{According to the result of Marks and Mason mentioned previously \cite{Marks2009},
this holds under rather mild conditions on $\rho$.%
}, and that $\map{\rho}{\FD}{\gl{}V}$ satisfies 
\begin{equation}
\rho\!\sm{\textrm{-}1}00{\textrm{-}1}\!=\!\left(-1\right)^{\FG}\id V\:,\label{eq:epsdef}
\end{equation}
where $\FG\!=\!0$ or $1$ according to whether the representation
$\rho$ is even or odd: note that the representation $\rodot\!=\!\rho\!\otimes\!\kan^{\mbox{-}\FG}$
is always even.
\begin{thm}
\label{thm:weakgens}Let $F_{1},\ldots,F_{d}\!\in\!\mof{}{\rho}$
denote a set of holomorphic forms of respective weights $w_{1},\ldots,w_{d}$,
that generate freely the module $\mof{}{\rho}$. Then the forms $\FE{n-\wt_{1}}F_{1},\ldots,\FE{n-\wt_{d}}F_{d}$
freely generate (over $\FC_{0}$) the module $\wof{2n+\FG}{\rho}$
of weakly holomorphic forms of weight $2n\!+\!\FG$, where 
\[
\wt_{i}=\frac{w_{i}-\FG}{2}=\left[\frac{w_{i}}{2}\right]\:.
\]
\end{thm}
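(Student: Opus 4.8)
The plan is to shuttle between weakly holomorphic and holomorphic forms by multiplying with powers of the discriminant $\Delta$, leaning on the assumed freeness of $\mof{}{\rho}$ over $\sof\!=\!\mathbb{C}\!\left[E_{4},E_{6}\right]$ at the holomorphic level and on \prettyref{lem:scalargen} to pin down the scalar coefficients. As a preliminary I would check that the proposed generators do lie in the right space: since $\FE{n-\wt_{i}}$ is a scalar weakly holomorphic form of weight $2\left(n\!-\!\wt_{i}\right)\!=\!2n\!+\!\FG\!-\! w_{i}$, its product with the weight $w_{i}$ form $F_{i}$ is a weakly holomorphic form of weight $2n\!+\!\FG$ and multiplier $\rho$, i.e. an element of $\wof{2n+\FG}{\rho}$.

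For generation, I would take an arbitrary $\FA\!\in\!\wof{2n+\FG}{\rho}$ and pick an integer $\ell\!\geq\!0$ large enough that $\Delta^{\ell}\FA$ has no pole at the cusp, so that $\Delta^{\ell}\FA\!\in\!\mof{2n+\FG+12\ell}{\rho}$ is holomorphic. By the freeness hypothesis it expands as $\Delta^{\ell}\FA\!=\!\sum_{i}g_{i}F_{i}$ with scalar holomorphic $g_{i}\!\in\!\sof$ of weight $2n\!+\!\FG\!+\!12\ell\!-\! w_{i}$. Dividing by $\Delta^{\ell}$ writes $\FA\!=\!\sum_{i}\left(\Delta^{-\ell}g_{i}\right)F_{i}$, where each $\Delta^{-\ell}g_{i}$ is now a scalar \emph{weakly} holomorphic form of weight $2\left(n\!-\!\wt_{i}\right)$. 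This is exactly where \prettyref{lem:scalargen} intervenes: it identifies $\FC_{2\left(n-\wt_{i}\right)}$ with $\FE{n-\wt_{i}}\FC_{0}$, so $\Delta^{-\ell}g_{i}\!=\!\wp_{i}\FE{n-\wt_{i}}$ for some $\wp_{i}\!\in\!\FC_{0}$, giving $\FA\!=\!\sum_{i}\wp_{i}\FE{n-\wt_{i}}F_{i}$ and establishing generation over $\FC_{0}$.

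For independence, I would begin with a relation $\sum_{i}\wp_{i}\FE{n-\wt_{i}}F_{i}\!=\!0$ with $\wp_{i}\!\in\!\FC_{0}$ and clear all cusp poles simultaneously by multiplying with $\Delta^{N}$ for a single large $N$ (possible as there are finitely many indices). Each $h_{i}\!=\!\Delta^{N}\wp_{i}\FE{n-\wt_{i}}$ then lies in $\sof$, the relation becomes $\sum_{i}h_{i}F_{i}\!=\!0$, and $\sof$-freeness forces $h_{i}\!=\!0$; since $\Delta^{N}\FE{n-\wt_{i}}$ is a nonzero form this yields $\wp_{i}\!=\!0$. Alternatively, once generation is in hand independence is automatic: $\wof{2n+\FG}{\rho}$ is free of rank $d$ over $\FC_{0}$, and any generating set whose cardinality equals the rank of a free module over a commutative ring is necessarily a basis.

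I expect the only real subtlety to be the cusp bookkeeping and the interlocking of the two module structures rather than any hard estimate; the conceptual crux is \prettyref{lem:scalargen}, which ensures that after dividing a holomorphic expansion by a power of $\Delta$ the resulting scalar coefficients land precisely in $\FE{n-\wt_{i}}\FC_{0}$, so that nothing leaks out of the span of the claimed generators. I would also verify the parity constraints imposed by \eqref{eq:epsdef}, guaranteeing that $\wt_{i}\!=\!\left(w_{i}\!-\!\FG\right)/2$ is an integer and that the intervening scalar weights are even; these follow at once from $\Mod{w_{i}}{\FG}2$ and $\Mod{2n+\FG}{\FG}2$.
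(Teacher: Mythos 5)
Your proposal is correct and takes essentially the same route as the paper's proof: multiply by a large power of $\Delta$ to land in the holomorphic module, expand via $\sof$-freeness, divide back by that power of $\Delta$, and invoke \prettyref{lem:scalargen} to place each scalar coefficient in $\FE{n-\wt_{i}}\FC_{0}$. Your freeness argument---clearing poles with $\Delta^{N}$ to reduce to a relation among $F_{1},\ldots,F_{d}$ over $\sof$---is precisely the step the paper's proof dismisses as obvious, just written out in detail.
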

\begin{proof}
Let $\FA\!\in\!\wof{2n+\FG}{\rho}$ denote a weakly holomorphic form
of weight $2n+\FG$. Then, for some integer $z$ large enough, the
product $\Delta\!\left(\tau\right)^{z}\FA\!\left(\tau\right)$ is
holomorphic (the smallest such integer is the valence of $\FA$),
consequently there exist holomorphic scalar forms $x_{1},\ldots,x_{d}\!\in\!\mathsf{M}$
such that 
\[
\Delta\!\left(\tau\right)^{z}\FA\!\left(\tau\right)=\sum_{i=1}^{d}x_{i}F_{i}\,,
\]
with each $x_{i}$ having weight $12z\!+\!2n\!-\!2\wt_{i}$. But each
product $\Delta^{-z}x_{i}$ is a weakly holomorphic scalar form of
weight $2\!\left(n\!-\!\wt_{i}\right)$, hence\global\long\def\hatg{y}
 
\[
\Delta^{-z}x_{i}=\FE{n-\wt_{i}}\hatg_{i}
\]
for some forms $\hatg_{i}\!\in\!\FC_{0}$, according to \prettyref{lem:scalargen}.
As a consequence, 
\[
\FA=\sum_{i=1}^{d}\hatg_{i}\left(\FE{n-\wt_{i}}F_{i}\right)\,,
\]
proving that $\FE{n-\wt_{1}}F_{1},\ldots,\FE{n-\wt_{d}}F_{d}$ generate
$\wof{2n+\FG}{\rho}$; that they are free generators is obvious, since
any relation between them would lead (after multiplication by a suitable
power of the discriminant form) to a relation between the generators
$F_{1},\ldots,F_{d}\!\in\!\mof{}{\rho}$. \end{proof}
\begin{lem}
\label{lem:detformula}
\begin{equation}
\prod_{i=1}^{d}\frac{\FE{n-\wt_{i}}}{\FE{-\wt_{i}}}\!=\!\frac{\detxi{2n+\FG}\!\left(\rho\right)}{\detxi{\FG}\!\left(\rho\right)}\:.\label{eq:detform2}
\end{equation}
\end{lem}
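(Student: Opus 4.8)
The statement to prove is the identity
\[
\prod_{i=1}^{d}\frac{\FE{n-\wt_{i}}}{\FE{-\wt_{i}}}\!=\!\frac{\detxi{2n+\FG}\!\left(\rho\right)}{\detxi{\FG}\!\left(\rho\right)}\:.
\]
The plan is to compute both sides by relating them to exterior products of free generating sets, and to exploit \prettyref{thm:weakgens} as the crucial bridge. The right-hand side is a ratio of determinant-forms, which by definition are normalized exterior products of free generators of the relevant spaces of weakly holomorphic forms. The key realization is that \prettyref{thm:weakgens} hands us explicit free generating sets for both $\wof{2n+\FG}{\rho}$ and $\wof{\FG}{\rho}$ in terms of the \emph{same} holomorphic generators $F_{1},\ldots,F_{d}$ of $\mof{}{\rho}$.

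**First I would compute each determinant-form.** Applying \prettyref{thm:weakgens} with the given $n$, the forms $\FE{n-\wt_{1}}F_{1},\ldots,\FE{n-\wt_{d}}F_{d}$ freely generate $\wof{2n+\FG}{\rho}$. Their exterior product is
\[
\left(\prod_{i=1}^{d}\FE{n-\wt_{i}}\right)\left(F_{1}\wedge\cdots\wedge F_{d}\right)\,,
\]
since each scalar factor $\FE{n-\wt_{i}}$ pulls out of the wedge in its own slot. Hence $\detxi{2n+\FG}\!\left(\rho\right)$ is the normalization (division by the leading $q$-coefficient) of this product. Applying the same theorem with $n$ replaced by $0$ gives generators $\FE{-\wt_{1}}F_{1},\ldots,\FE{-\wt_{d}}F_{d}$ for $\wof{\FG}{\rho}$, and $\detxi{\FG}\!\left(\rho\right)$ is the normalization of $\left(\prod_{i}\FE{-\wt_{i}}\right)\left(F_{1}\wedge\cdots\wedge F_{d}\right)$. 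Taking the ratio, the common factor $F_{1}\wedge\cdots\wedge F_{d}$ cancels, and one is left precisely with $\prod_{i}\FE{n-\wt_{i}}/\FE{-\wt_{i}}$ up to the normalizing scalars.

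**The normalization bookkeeping.** The one subtlety is that $\detxi{2n+\FG}$ and $\detxi{\FG}$ are each rescaled so that the lowest $q$-coefficient is $1$. The cleanest way to handle this is to observe that the ratio on the left, $\prod_{i}\FE{n-\wt_{i}}/\FE{-\wt_{i}}$, is a well-defined weakly holomorphic scalar form of weight $2nd$ (a product of the explicit ratios $\FE{n-\wt_{i}}/\FE{-\wt_{i}}$, each expressible via the multiplicativity formula Eq.\eqref{eq:fnmult} as a product of powers of $J+744$ and $J-984$). Both sides are therefore scalar forms of the same weight transforming under $\wedge^{d}\rho$; since $\detxi{}$ is defined precisely as the exterior product divided by its leading coefficient, forming the quotient of the two determinant-forms reproduces the same scalar ratio, and both the left-hand side and the right-hand side have leading $q$-coefficient $1$ by construction. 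Equality of two forms of the same weight and multiplier with matching normalization is then immediate.

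**Where the difficulty sits.** There is no deep obstacle here; the engine is \prettyref{thm:weakgens}, and the rest is an exterior-algebra computation plus careful tracking of normalizing constants. The one place I would be most careful is confirming that the leading-coefficient normalizations on both sides are genuinely compatible—i.e., that dividing two normalized determinant-forms yields a scalar form already normalized to leading coefficient $1$, so that no stray constant survives. This follows because $\detxi{}$ is \emph{defined} by dividing out the leading coefficient, so the quotient $\detxi{2n+\FG}/\detxi{\FG}$ is automatically the ratio of the underlying exterior products divided by the ratio of their leading coefficients, and that matches the explicit scalar product $\prod_{i}\FE{n-\wt_{i}}/\FE{-\wt_{i}}$ exactly once one notes its own leading coefficient equals the corresponding ratio. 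Verifying this compatibility is the only genuinely non-mechanical step.
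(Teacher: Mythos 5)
Your proof is correct and follows essentially the same route as the paper: apply \prettyref{thm:weakgens} at weights $2n+\FG$ and $\FG$, pull the scalar factors $\FE{n-\wt_{i}}$ out of the exterior product, and note that the two normalizing constants coincide (both equal the leading $q$-coefficient of $F_{1}\wedge\cdots\wedge F_{d}$, since every $\FE m$ has leading coefficient $1$), so they cancel in the ratio. The only blemishes are inessential side remarks: the individual ratio $\FE{n-\wt_{i}}/\FE{-\wt_{i}}$ has weight $2n$, so it is not itself a product of powers of $J+744$ and $J-984$ (that description applies to $\FE n\FE{-\wt_{i}}/\FE{n-\wt_{i}}$), and it need not be weakly holomorphic (it can acquire poles at zeros of $E_{4}$ or $E_{6}$ in the upper half-plane) --- neither affects the argument.
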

\begin{proof}
By \prettyref{thm:weakgens}, the forms $\FE{n-\wt_{1}}F_{1},\ldots,\FE{n-\wt_{d}}F_{d}$
freely generate $\wof{2n+\FG}{\rho}$, hence their exterior product
is proportional to $\detxi{2n+\FG}\!\left(\rho\right)$: 
\begin{equation}
\detxi{2n+\FG}\!\left(\rho\right)\!=\!\frac{1}{K}\left(\prod_{i=1}^{d}\FE{n-\wt_{i}}\right)\left(F_{1}\wedge F_{2}\wedge\ldots\wedge F_{d}\right)\,,\label{eq:detform1}
\end{equation}
for some nonzero constant of proportionality $K\!\in\!\mathbb{C}$
(whose precise value depends on the choice of the generating set).
The important point is that this constant $K$ equals the coefficient
of the lowest power in the $q$-expansion of the exterior product
$F_{1}\wedge F_{2}\wedge\ldots\wedge F_{d}$, hence it is the same
for all $n$, and the result follows.\end{proof}
\begin{thm}
\label{thm:weightsign} 
\begin{equation}
\begin{aligned}\left|\set i{\wt_{i}\!\in\!2\mathbb{Z}+1}\right|=\, & \alpha\left(\rodot\right)\:,\\
\left|\set i{\wt_{i}\!\in\!3\mathbb{Z}+1}\right|=\, & \beta_{1}\!\left(\rodot\right)\:,\\
\left|\set i{\wt_{i}\!\in\!3\mathbb{Z}+2}\right|=\, & \beta_{2}\!\left(\rodot\right)\:.
\end{aligned}
\label{eq:weightssign}
\end{equation}
\end{thm}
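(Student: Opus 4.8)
The plan is to read off the three counts from the already-established identity of \prettyref{lem:detformula} by rewriting both of its sides as monomials in $E_{4}$, $E_{6}$ and $\un$ and then comparing exponents. First I would apply the multiplicativity relation \eqref{eq:fnmult} to each ratio $\FE{n-\wt_i}/\FE{-\wt_i}$ (taking the pair $(n,-\wt_i)$), which gives
\[
\prod_{i=1}^{d}\frac{\FE{n-\wt_i}}{\FE{-\wt_i}}=\FE n^{d}\left(J\!+\!744\right)^{-A_{3}(n)}\left(J\!-\!984\right)^{-A_{2}(n)},
\]
where $A_{k}(n)=\sum_{i}\FL n{-\wt_i}k$. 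Substituting $\FE n=E_{4}^{\rem n3}E_{6}^{\rem n2}\Delta^{\rem n{\infty}}$ together with $J\!+\!744=E_{4}^{3}/\Delta$ and $J\!-\!984=E_{6}^{2}/\Delta$ turns the left-hand side into an explicit monomial in $E_{4}$, $E_{6}$, $\un$ whose exponents are linear in $\rem n3$, $\rem n2$, $A_{3}(n)$ and $A_{2}(n)$.

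For the right-hand side I would first invoke \eqref{eq:detform3} to write $\detxi{2n+\FG}\!\left(\rho\right)/\detxi{\FG}\!\left(\rho\right)=\un^{2nd}\,\detxi 0\!\left(\rodot\!\otimes\!\kan^{\mbox{-}2n}\right)/\detxi 0\!\left(\rodot\right)$, using $\rho\!\otimes\!\kan^{\mbox{-}2n-\FG}\!=\!\rodot\!\otimes\!\kan^{\mbox{-}2n}$. Since $\kan^{\mbox{-}2n}$ is an even power of the odd character $\kan$, both $\rodot$ and $\rodot\!\otimes\!\kan^{\mbox{-}2n}$ are even, so the Proposition \eqref{eq:detformula} applies to each, and the ratio is again a monomial in $E_{4}$, $E_{6}$, $\un$ whose exponents are differences of the multiplicities $\alpha$, $\beta_{1}$, $\beta_{2}$. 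The crucial input is how these multiplicities move under $\rodot\!\mapsto\!\rodot\!\otimes\!\kan^{\mbox{-}2n}$: from \eqref{eq:canonical-1} one computes $\kan^{\mbox{-}2n}\!\sm 0{\textrm{-}1}10\!=\!(-1)^{n}$ and $\kan^{\mbox{-}2n}\!\sm 0{\textrm{-}1}1{\textrm{-}1}\!=\!\exp(\mbox{-}2\pi\mathsf{i}n/3)$, so tensoring multiplies the eigenvalues of $\rodot\!\sm 0{\textrm{-}1}10$ by $(-1)^{n}$ and those of $\rodot\!\sm 0{\textrm{-}1}1{\textrm{-}1}$ by $\exp(\mbox{-}2\pi\mathsf{i}n/3)$. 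Hence $\alpha$ is unchanged for $n$ even and replaced by $d\!-\!\alpha$ for $n$ odd, while the three multiplicities of $\rodot\!\sm 0{\textrm{-}1}1{\textrm{-}1}$ (those of $1$, $\exp(2\pi\mathsf{i}/3)$ and $\exp(4\pi\mathsf{i}/3)$, the first being $d\!-\!\beta_{1}\!-\!\beta_{2}$) are cyclically permuted according to $n\bmod 3$.

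Because $E_{4}$, $E_{6}$, $\un$ are multiplicatively independent --- the divisor of $E_{4}$ on $\uhp$ is a simple zero at the order-$3$ elliptic point, that of $E_{6}$ a simple zero at $\tau\!=\!\mathsf{i}$, and $\un$ is nowhere vanishing --- the exponents of $E_{4}$ and of $E_{6}$ can be matched separately on the two sides (equivalently, the $E_{6}$-exponent is the order of vanishing at $\tau\!=\!\mathsf{i}$ and the $E_{4}$-exponent that at the order-$3$ point). Matching the $E_{6}$-exponent for $n$ odd, e.g. $n\!=\!1$, yields $\sum_{i}\FL 1{-\wt_i}2=\alpha(\rodot)$; matching the $E_{4}$-exponent for $n\!\equiv\!1\pmod 3$ (take $n\!=\!1$) yields $\sum_{i}\FL 1{-\wt_i}3=\beta_{1}(\rodot)+\beta_{2}(\rodot)$, and for $n\!\equiv\!2\pmod 3$ (take $n\!=\!2$) yields $\sum_{i}\FL 2{-\wt_i}3=\beta_{2}(\rodot)$. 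Finally I would feed these three sums into the counting \prettyref{lem:counting} with $X\!=\!\left\{ \wt_1,\dots,\wt_d\right\}$: the cases $(k,p)=(2,1)$, $(3,2)$ and $(3,1)$ produce respectively $\left|\set i{\wt_i\!\in\!2\mathbb{Z}\!+\!1}\right|=\alpha(\rodot)$, $\left|\set i{\wt_i\!\in\!3\mathbb{Z}\!+\!2}\right|=\beta_{2}(\rodot)$ and $\left|\set i{\wt_i\!\in\!3\mathbb{Z}\!+\!1}\right|=\sum_{i}\FL 1{-\wt_i}3-\sum_{i}\FL 2{-\wt_i}3=\beta_{1}(\rodot)$, which are precisely the three asserted identities.

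The step I expect to be the main obstacle is the middle one: correctly evaluating the character values $\kan^{\mbox{-}2n}\!\sm 0{\textrm{-}1}10$ and $\kan^{\mbox{-}2n}\!\sm 0{\textrm{-}1}1{\textrm{-}1}$ from \eqref{eq:canonical-1} and then propagating the resulting sign flip of $\alpha$ and cyclic shift of $(\beta_{1},\beta_{2})$ through the arithmetic of the remainders $\rem n2$ and $\rem n3$, so that the $E_{4}$- and $E_{6}$-exponents collapse to the clean values $\alpha$, $\beta_{1}+\beta_{2}$ and $\beta_{2}$. Once this bookkeeping is in place, the remaining matching of exponents and the appeal to the counting lemma are routine.
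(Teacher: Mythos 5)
Your proposal is correct and takes essentially the same route as the paper's own proof: the identical chain of \prettyref{lem:detformula}, Eq.~\eqref{eq:detform3}, the Proposition (Eq.~\eqref{eq:detformula}) applied to the even representations $\dot{\rho}$ and $\dot{\rho}\otimes\kan^{-2n}$, the multiplication rule \eqref{eq:fnmult}, the computation of how $\alpha,\beta_{1},\beta_{2}$ shift under tensoring by $\kan^{-2n}$, and finally \prettyref{lem:counting}. The differences are purely presentational (you specialize to $n=1,2$ at the outset and justify the exponent matching via divisors, while the paper keeps general $n$ before specializing), and your bookkeeping of the eigenvalue shifts is correct.
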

\begin{proof}
According to Eq.\prettyref{eq:detformula}, 
\[
\frac{\detxi 0\!\left(\rodot\!\otimes\!\kan^{\mbox{-}2n}\right)}{\detxi 0\!\left(\rodot\right)}\!=\!\left(\frac{E_{4}\!\left(\tau\right)}{\un^{4}}\right)^{B_{n}}\left(\frac{E_{6}\!\left(\tau\right)}{\un^{6}}\right)^{A_{n}}\:,
\]
where $B_{n}=\beta_{1}\!\left(\rodot\!\otimes\!\kan^{\mbox{-}2n}\right)\!-\!\beta_{1}\!\left(\rodot\right)\!+\!2\beta_{2}\!\left(\rodot\!\otimes\!\kan^{\mbox{-}2n}\right)\!-\!2\beta_{2}\!\left(\rodot\right)$
and $A_{n}=\alpha\!\left(\rodot\!\otimes\!\kan^{\mbox{-}2n}\right)\!-\!\alpha\!\left(\rodot\right)$.
On the other hand, it follows from Eqs.\prettyref{eq:detform2} and
\prettyref{eq:detform3} that 
\begin{multline*}
\frac{\detxi 0\!\left(\rodot\!\otimes\!\kan^{\mbox{-}2n}\right)}{\detxi 0\!\left(\rodot\right)}\!=\!\un^{-2nd}\frac{\detxi{2n+\FG}\!\left(\rho\right)}{\detxi{\FG}\!\left(\rho\right)}\!=\!\un^{-2nd}\prod_{i}\frac{\FE{n-\wt_{i}}\!\left(\tau\right)}{\FE{-\wt_{i}}\!\left(\tau\right)}\\
\!=\!\left(\frac{\FE n\!\left(\tau\right)}{\un^{2n}}\right)^{d}\prod_{i}\frac{\FE{n-\wt_{i}}\!\left(\tau\right)}{\FE n\!\left(\tau\right)\FE{-\wt_{i}}\!\left(\tau\right)}\\
\!=\!\left(\frac{E_{4}\!\left(\tau\right)}{\un^{4}}\right)^{\rem n3d-3\sum_{i}\FL n{\mbox{-}\wt_{i}}3}\left(\frac{E_{6}\!\left(\tau\right)}{\un^{6}}\right)^{\rem n2d-2\sum_{i}\FL n{\mbox{-}\wt_{i}}2}\:,
\end{multline*}
according to Eq.\prettyref{eq:fnmult}. Comparing powers of $E_{4}$
and $E_{6}$ on both sides, one concludes that 
\begin{align*}
3\sum_{i}\FL n{\mbox{-}\wt_{i}}3\!= & \rem n3d-B_{n}\:,\\
2\sum_{i}\FL n{\mbox{-}\wt_{i}}2\!= & \rem n2d-A_{n\:.}
\end{align*}
Because $\kan$ is one dimensional, it is straightforward to compute
$A_{n}$ and $B_{n}$ explicitly, leading to
\[
A_{n}\!=\!\begin{cases}
0 & \textrm{ if }\rem n2\!=\!0\,,\\
d-2\alpha\!\left(\rodot\right) & \textrm{ if }\rem n2\!=\!1\,,
\end{cases}
\]
and

\[
B_{n}\!=\!\begin{cases}
0 & \textrm{ if }\rem n3\!=\!0\,,\\
2d-3\beta_{1}\!\left(\rodot\right)-3\beta_{2}\!\left(\rodot\right) & \textrm{ if }\rem n3\!=\!1\,,\\
d-3\beta_{2}\!\left(\rodot\right) & \textrm{ if }\rem n3\!=\!2\,.
\end{cases}
\]
Finally, combining the above with \prettyref{lem:counting}, one arrives
at 
\begin{align*}
\left|\set i{\wt_{i}\!\in\!3\mathbb{Z}\!+\!1}\right|= & \sum_{i}\!\left(\FL 1{\mbox{-}\wt_{i}}3\!-\!\FL 2{\mbox{-}\wt_{i}}3\right)\!=\!\frac{1}{3}\left(d\!-\! B_{1}\!+\! B_{2}\right)\!=\!\beta_{1}\,,\\
\left|\set i{\wt_{i}\!\in\!3\mathbb{Z}\!+\!2}\right|= & \sum_{i}\!\left(\FL 2{\mbox{-}\wt_{i}}3\!-\!\FL 3{\mbox{-}\wt_{i}}3\right)\!=\!\frac{1}{3}\left(d\!-\! B_{2}\!+\! B_{3}\right)\!=\!\beta_{2}\,,\\
\left|\set i{\wt_{i}\!\in\!2\mathbb{Z}\!+\!1}\right|= & \sum_{i}\!\left(\FL 1{\mbox{-}\wt_{i}}2\!-\!\FL 2{\mbox{-}\wt_{i}}2\right)\!=\!\frac{1}{2}\left(d\!-\! A_{1}\!+\! A_{2}\right)\!=\!\alpha\,.
\end{align*}
\end{proof}
\begin{lem}
\label{lem:gensdet}If $F_{1},\ldots,F_{d}\!\in\!\mof{}{\rho}$ generate
freely the module $\mof{}{\rho}$, then 
\begin{equation}
F_{1}\wedge F_{2}\wedge\ldots\wedge F_{d}\!=\! K\un{}^{\sum_{i}w_{i}}\:.\label{eq:hgensdet}
\end{equation}
\end{lem}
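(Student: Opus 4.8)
The plan is to read the exterior product off the closed forms that the preceding results already supply for the two ``determinants'' $\detxi{\FG}\!\left(\rho\right)$ and $\prod_{i}\FE{-\wt_{i}}$, and then to check that after cancellation only a power of $\un$ is left. Specialising \eqref{eq:detform1} (the identity obtained inside the proof of \prettyref{lem:detformula}) to $n=0$ gives
\[
F_{1}\wedge F_{2}\wedge\cdots\wedge F_{d}=K\,\frac{\detxi{\FG}\!\left(\rho\right)}{\prod_{i=1}^{d}\FE{-\wt_{i}}}\,,
\]
where $K$ is the coefficient of the lowest power of $q$ in the exterior product (the same proportionality constant as in \eqref{eq:detform1}). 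It therefore suffices to prove that this quotient equals $\un^{\sum_{i}w_{i}}$, for then the leading term $q^{\sum_{i}w_{i}/12}$ of $\un^{\sum_{i}w_{i}}$ automatically identifies $K$ as the asserted leading coefficient.

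For the numerator I would first use \eqref{eq:detform3} to write $\detxi{\FG}\!\left(\rho\right)=\un^{\FG d}\detxi 0\!\left(\rodot\right)$, and then apply \eqref{eq:detformula} (legitimate since $\rodot$ is even) to expand $\detxi 0\!\left(\rodot\right)=\left(E_{4}/\un^{4}\right)^{\beta_{1}+2\beta_{2}}\left(E_{6}/\un^{6}\right)^{\alpha}$, with $\alpha,\beta_{1},\beta_{2}$ the eigenvalue multiplicities of $\rodot$. For the denominator I would expand each factor from its definition \eqref{eq:dnfactdef}, obtaining $\prod_{i}\FE{-\wt_{i}}=E_{4}^{\sum_{i}\rem{(-\wt_{i})}3}\,E_{6}^{\sum_{i}\rem{(-\wt_{i})}2}\,\Delta^{\sum_{i}\rem{(-\wt_{i})}{\infty}}$.

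The crucial step is that the powers of $E_{4}$ and $E_{6}$ cancel exactly between numerator and denominator, and this is precisely what \prettyref{thm:weightsign} delivers: since $\rem{(-\wt_{i})}3\in\left\{ 0,1,2\right\}$ and $\rem{(-\wt_{i})}2\in\left\{ 0,1\right\}$ count, with the appropriate weight, the $\wt_{i}$ lying in each residue class, one gets $\sum_{i}\rem{(-\wt_{i})}3=\beta_{1}+2\beta_{2}$ and $\sum_{i}\rem{(-\wt_{i})}2=\alpha$, matching the exponents produced by \eqref{eq:detformula}. I expect this residue bookkeeping to be the only genuinely delicate point; everything around it is substitution.

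After the cancellation a pure power of $\un$ survives. Using $\Delta=\un^{12}$, its exponent is $\FG d-4\!\left(\beta_{1}+2\beta_{2}\right)-6\alpha-12\sum_{i}\rem{(-\wt_{i})}{\infty}$. Substituting the definition \eqref{eq:ninfdef}, namely $\rem{(-\wt_{i})}{\infty}=-\tfrac{\wt_{i}}{6}-\tfrac{1}{2}\rem{(-\wt_{i})}2-\tfrac{1}{3}\rem{(-\wt_{i})}3$, converts $-12\sum_{i}\rem{(-\wt_{i})}{\infty}$ into $2\sum_{i}\wt_{i}+6\alpha+4\!\left(\beta_{1}+2\beta_{2}\right)$, so the $\alpha$ and $\beta$ contributions telescope away and the exponent collapses to $\FG d+2\sum_{i}\wt_{i}=\sum_{i}\!\left(2\wt_{i}+\FG\right)=\sum_{i}w_{i}$. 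Hence the quotient equals $\un^{\sum_{i}w_{i}}$, the lemma follows, and $K\neq0$ because the $F_{i}$ generate freely.
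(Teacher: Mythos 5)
Your proposal is correct and follows essentially the same route as the paper's own proof: specialising Eq.~\eqref{eq:detform1} at $n=0$, rewriting $\detxi{\FG}\!\left(\rho\right)$ via Eqs.~\eqref{eq:detform3} and \eqref{eq:detformula}, expanding $\prod_{i}\FE{-\wt_{i}}$ from Eq.~\eqref{eq:dnfactdef}, and cancelling the $E_{4}$ and $E_{6}$ powers using the residue counts of \prettyref{thm:weightsign} together with Eq.~\eqref{eq:ninfdef}. The bookkeeping with $\rem{\left(\mbox{-}\wt_{i}\right)}3$, $\rem{\left(\mbox{-}\wt_{i}\right)}2$ and $\rem{\left(\mbox{-}\wt_{i}\right)}{\infty}$ matches the paper's computation exactly, including the final identification $\FG d+2\sum_{i}\wt_{i}=\sum_{i}w_{i}$.
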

\begin{proof}
It follows from Eqs.\prettyref{eq:detform1} and \prettyref{eq:detformula}
that 
\begin{gather*}
F_{1}\wedge F_{2}\wedge\ldots\wedge F_{d}=K\frac{\detxi{\FG}\!\left(\rho\right)}{\prod_{i}\FE{-\wt_{i}}}=\\
=\frac{K\un^{d\FG}}{\prod_{i}\FE{-\wt_{i}}}\left(\!\frac{E_{4}\!\left(\tau\right)}{\un^{4}}\!\right)^{\!\beta_{1}\left(\rodot\right)+2\beta_{2}\left(\rodot\right)}\left(\!\frac{E_{6}\!\left(\tau\right)}{\un^{6}}\!\right)^{\!\alpha\left(\rodot\right)}\:.
\end{gather*}
On the other hand, 
\[
\prod_{i}\FE{-\wt_{i}}\!=\! E_{4}\!\left(\tau\right)^{\sum_{i}\rem{\left(\mbox{-}\wt_{i}\right)}3}E_{6}\!\left(\tau\right)^{\sum_{i}\rem{\left(\mbox{-}\wt_{i}\right)}2}\Delta\!\left(\tau\right)^{\sum_{i}\rem{\left(\mbox{-}\wt_{i}\right)}{\infty}}\:.
\]
But, according to Eq.\prettyref{eq:weightssign}, 
\begin{align*}
\sum_{i}\rem{\left(\mbox{-}\wt_{i}\right)}3= & \,\beta_{1}\!\left(\rodot\right)+2\beta_{2}\!\left(\rodot\right)\:,\\
\sum_{i}\rem{\left(\mbox{-}\wt_{i}\right)}2= & \,\alpha\!\left(\rodot\right)\:,\\
\sum_{i}\rem{\left(\mbox{-}\wt_{i}\right)}{\infty}= & -\frac{1}{6}\sum_{i}\wt_{i}-\frac{\alpha\!\left(\rodot\right)}{2}-\frac{\beta_{1}\!\left(\rodot\right)+2\beta_{2}\!\left(\rodot\right)}{3}\:,
\end{align*}
leading to 
\[
F_{1}\wedge F_{2}\wedge\ldots\wedge F_{d}\!=\! K\un{}^{d\FG+2\sum_{i}\wt_{i}}\:.
\]
\end{proof}
\begin{cor*}
\label{cor:.weightsum}The sum of the fundamental weights cannot be
negative, $\sum_{i}w_{i}\geq0$.\end{cor*}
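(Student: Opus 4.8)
The plan is to deduce the corollary directly from the explicit evaluation of the exterior product of the free generators obtained in \prettyref{lem:gensdet}. That lemma gives $F_{1}\wedge F_{2}\wedge\ldots\wedge F_{d}=K\un^{\sum_{i}w_{i}}$ with a nonzero constant $K$, so the entire content of the corollary is a constraint forced on the exponent $\sum_{i}w_{i}$ by the analytic behaviour of the two sides at the cusp $\tau\!=\!\mathsf{i}\infty$.

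First I would argue that the left-hand side is holomorphic at the cusp. Each generator $F_{i}\!\in\!\mof{}{\rho}$ is by assumption a holomorphic form, hence each of its components stays bounded as $\tau\!\rightarrow\!\mathsf{i}\infty$; the exterior product $F_{1}\wedge\ldots\wedge F_{d}$ is the determinant of the matrix whose columns are the $F_{i}$, i.e.\ a polynomial expression in these bounded components, and is therefore itself bounded as $\tau\!\rightarrow\!\mathsf{i}\infty$.

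Next I would recall that $\un=q^{1/12}\prod_{n\geq1}(1\!-\!q^{n})^{2}$, so that its $q$-expansion begins with $q^{1/12}$ and, for the integer exponent $m\!=\!\sum_{i}w_{i}$, the form $\un^{m}$ behaves like $q^{m/12}$ as $q\!\rightarrow\!0$. Since $K\!\neq\!0$, the right-hand side $K\un^{\sum_{i}w_{i}}$ stays bounded at the cusp precisely when its leading exponent $\left(\sum_{i}w_{i}\right)/12$ is non-negative. Comparing this with the boundedness established for the left-hand side forces $\sum_{i}w_{i}\geq0$, which is the assertion. I expect no genuine obstacle in carrying this out; the only point deserving a moment's attention is the elementary observation that the determinant of holomorphic forms remains holomorphic at the cusp, which is exactly what reduces the whole statement to the behaviour of a single power of $\un$.
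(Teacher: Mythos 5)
Your proposal is correct and is essentially the paper's own argument: the paper likewise deduces the corollary from \prettyref{lem:gensdet} by noting that if $\sum_{i}w_{i}$ were negative, the right-hand side $K\un^{\sum_{i}w_{i}}\sim q^{\sum_{i}w_{i}/12}$ would have a pole at the cusp, contradicting the holomorphy of the exterior product $F_{1}\wedge\ldots\wedge F_{d}$. You merely spell out the boundedness of the determinant and the leading behaviour of $\un^{m}$, which the paper leaves implicit.
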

\begin{proof}
Should $\sum_{i}w_{i}$ be negative, the exterior product $F_{1}\wedge F_{2}\wedge\ldots\wedge F_{d}$
would not be holomorphic.
\end{proof}
We note that \prettyref{lem:gensdet} and its corollary hold more
generally for any set of forms that generate freely a submodule $\mathsf{m}\!<\!\mof{}{\rho}$
that is $\FH$-stable, i.e. for which $\FH\mathsf{m}\!<\!\mathsf{m}$,
where $\FH$ denotes the covariant derivative.
\begin{thm}
Let $\hilb{\rho}z\!=\!\sum_{i}z^{w_{i}}$ denote the Hilbert-polynomial
of $\sof\!\left(\rho\right)$, and $\zeta\!=\! e^{\frac{2\pi\mathsf{i}}{3}}$
a primitive third root of unity. Then 
\begin{align}
\hilb{\rho}{-\mathsf{i}}= & \,\tr{\,\rho\!\sm 0{\textrm{-}1}10}\label{eq:hilbpoly1}
\end{align}
and
\begin{equation}
\hilb{\rho}{\zeta^{\pm1}}=\,\tr{\,\rho\!\sm 0{\textrm{-}1}1{\textrm{-}1}^{\!\mp1}}\:.\label{eq:hilbpoly2}
\end{equation}
\end{thm}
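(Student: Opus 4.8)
The plan is to read off $\hilb{\rho}z$ from the distribution of residues of the half-weights $\wt_i$, which \prettyref{thm:weightsign} has already pinned down. The key observation is that each fundamental weight splits as $w_i\!=\!2\wt_i\!+\!\FG$, so that $z^{w_i}\!=\!z^{\FG}\!\left(z^{2}\right)^{\wt_i}$ and therefore $\hilb{\rho}z\!=\!z^{\FG}\sum_{i}\!\left(z^{2}\right)^{\wt_i}$. For each of the arguments $z\!\in\!\left\{ -\mathsf{i},\zeta,\zeta^{\mbox{-}1}\right\} $ the square $z^{2}$ is a root of unity of order $2$ or $3$, so $\left(z^{2}\right)^{\wt_i}$ depends only on $\wt_i$ modulo $2$ or modulo $3$; since \prettyref{thm:weightsign} counts precisely how many $\wt_i$ fall into each residue class, the whole sum becomes an explicit combination of $\alpha\!\left(\rodot\right)$, $\beta_{1}\!\left(\rodot\right)$ and $\beta_{2}\!\left(\rodot\right)$. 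The final step in each case is to recognise this combination as a trace and to reinstate the twist by $\kan$ that turns $\rodot\!=\!\rho\!\otimes\!\kan^{\mbox{-}\FG}$ back into $\rho$, using the known values from Eq.\eqref{eq:canonical-1}.

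For $z\!=\!-\mathsf{i}$ one has $z^{2}\!=\!-1$, so $\sum_{i}\!\left(z^{2}\right)^{\wt_i}$ counts the even $\wt_i$ with $+1$ and the odd ones with $-1$. By \prettyref{thm:weightsign} there are $\alpha\!\left(\rodot\right)$ odd half-weights, hence the sum equals $d\!-\!2\alpha\!\left(\rodot\right)$, which is exactly $\tr{\,\rodot\!\sm 0{\textrm{-}1}10}$ by the first line of Eq.\eqref{eq:sigtrace}. Multiplying by the prefactor $z^{\FG}\!=\!\left(-\mathsf{i}\right)^{\FG}\!=\!\kan\!\sm 0{\textrm{-}1}10^{\FG}$ converts this trace from $\rodot$ to $\rho$, giving $\hilb{\rho}{-\mathsf{i}}\!=\!\tr{\,\rho\!\sm 0{\textrm{-}1}10}$ and establishing Eq.\eqref{eq:hilbpoly1}.

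For $z\!=\!\zeta$ the argument runs in parallel, now with $z^{2}\!=\!\zeta^{2}$ a primitive cube root of unity: the classes $\wt_i\!\equiv\!1$ and $\wt_i\!\equiv\!2$ modulo $3$ contribute $\zeta^{2}$ and $\zeta$ with multiplicities $\beta_{1}\!\left(\rodot\right)$ and $\beta_{2}\!\left(\rodot\right)$, and the remaining $d\!-\!\beta_{1}\!\left(\rodot\right)\!-\!\beta_{2}\!\left(\rodot\right)$ terms contribute $1$. Because $\sm 0{\textrm{-}1}1{\textrm{-}1}$ has order $3$, the operator $\rodot\!\sm 0{\textrm{-}1}1{\textrm{-}1}$ has eigenvalues $1,\zeta,\zeta^{2}$ with those same multiplicities, so its inverse has trace $d\!-\!\beta_{1}\!\left(\rodot\right)\!-\!\beta_{2}\!\left(\rodot\right)\!+\!\beta_{1}\!\left(\rodot\right)\zeta^{2}\!+\!\beta_{2}\!\left(\rodot\right)\zeta$, which is precisely the sum just obtained. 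Reinstating the prefactor $z^{\FG}\!=\!\zeta^{\FG}\!=\!\kan\!\sm 0{\textrm{-}1}1{\textrm{-}1}^{\mbox{-}\FG}$ then yields $\hilb{\rho}{\zeta}\!=\!\tr{\,\rho\!\sm 0{\textrm{-}1}1{\textrm{-}1}^{\mbox{-}1}}$, the upper-sign instance of Eq.\eqref{eq:hilbpoly2}. The lower-sign instance follows immediately, since $\hilb{\rho}z$ has real coefficients, so $\hilb{\rho}{\zeta^{\mbox{-}1}}\!=\!\overline{\hilb{\rho}{\zeta}}$, and conjugating the trace of the finite-order operator $\rho\!\sm 0{\textrm{-}1}1{\textrm{-}1}^{\mbox{-}1}$ replaces it by the trace of its inverse.

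Most of this is bookkeeping, and I expect the only genuinely delicate point to be exactly that: keeping straight which primitive cube root ($\zeta$ or $\zeta^{2}$) carries multiplicity $\beta_{1}$ versus $\beta_{2}$, and hence whether the matching trace is that of $\rodot\!\sm 0{\textrm{-}1}1{\textrm{-}1}$ or of its inverse, so as to land on the correct $\mp$ pattern. I must also verify that the $\kan$-twist reproduces the prefactor $z^{\FG}$ consistently for all three arguments, which rests on $\kan\!\sm 0{\textrm{-}1}10\!=\!-\mathsf{i}$ and $\kan\!\sm 0{\textrm{-}1}1{\textrm{-}1}\!=\!\zeta^{2}$ together with the relation $\zeta^{3}\!=\!1$; a stray sign or exponent here would silently spoil the identity.
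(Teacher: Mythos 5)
Your proof is correct and follows essentially the same route as the paper: split each weight as $w_{i}=2\wt_{i}+\FG$, convert $\sum_{i}\left(z^{2}\right)^{\wt_{i}}$ at $z^{2}=-1$ and $z^{2}=\zeta^{\pm2}$ into the multiplicities $\alpha\!\left(\rodot\right),\beta_{1}\!\left(\rodot\right),\beta_{2}\!\left(\rodot\right)$ via \prettyref{thm:weightsign}, recognize the result as $\tr{\,\rodot\!\sm 0{\textrm{-}1}10}$ resp. $\tr{\,\rodot\!\sm 0{\textrm{-}1}1{\textrm{-}1}^{\!\mp1}}$, and undo the $\kan^{\mbox{-}\FG}$ twist using Eq.\eqref{eq:canonical-1}. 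The only cosmetic difference is that you treat the lower-sign case of Eq.\eqref{eq:hilbpoly2} by complex conjugation (valid since $\rho\!\sm 0{\textrm{-}1}1{\textrm{-}1}$ has finite order, so its eigenvalues are roots of unity), whereas the paper carries both signs through the same computation.
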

\begin{proof}
Clearly, since $w_{i}\!=\!2\wt_{i}+\FG$, one has 
\[
\hilb{\rho}{-\mathsf{i}}\!=\!\left(-\mathsf{i}\right)^{\FG}\sum_{i=1}^{d}\left(-1\right)^{\wt_{i}}\:.
\]
But it follows from Eq.\prettyref{eq:weightssign} that 
\begin{gather*}
\sum_{i=1}^{d}\left(-1\right)^{\wt_{i}}\!=\!\left|\set i{\wt_{i}\!\in\!2\mathbb{Z}}\right|-\left|\set i{\wt_{i}\!\in\!2\mathbb{Z}\!+\!1}\right|\!=\! d-2\alpha\!\left(\rodot\right)\\
\!=\!\tr{\,\rodot\!\sm 0{\textrm{-}1}10}\!=\!\kan\!\sm 0{\textrm{-}1}10^{\textrm{-}\FG}\tr{\,\rho\!\sm 0{\textrm{-}1}10}\!=\!\mathsf{i}^{\FG}\tr{\,\rho\!\sm 0{\textrm{-}1}10}\:.
\end{gather*}
Similarly, 
\[
\hilb{\rho}{\zeta^{\pm1}}\!=\!\zeta^{\pm\FG}\sum_{i=1}^{d}\zeta^{\pm2\wt_{i}}\:.
\]
But 
\begin{multline*}
\sum_{i=1}^{d}\zeta^{\pm2\wt_{i}}\!=\!\left|\set i{\wt_{i}\!\in\!3\mathbb{Z}}\right|+\zeta^{\pm2}\left|\set i{\wt_{i}\!\in\!3\mathbb{Z}\!+\!1}\right|+\zeta^{\pm4}\left|\set i{\wt_{i}\!\in\!3\mathbb{Z}\!+\!2}\right|\\
=d-\beta_{1}\!\left(\rodot\right)-\beta_{2}\!\left(\rodot\right)+\beta_{1}\!\left(\rodot\right)e^{{\scriptscriptstyle \mp}\frac{2\pi\mathsf{i}}{3}}+\beta_{2}\!\left(\rodot\right)e^{{\scriptscriptstyle \pm}\frac{2\pi\mathsf{i}}{3}}\!=\!\tr{\,\rodot\!\sm 0{\textrm{-}1}1{\textrm{-}1}^{\!\mp1}}\\
=\!\kan\!\sm 0{\textrm{-}1}1{\textrm{-}1}^{\!\pm\FG}\tr{\,\rho\!\sm 0{\textrm{-}1}1{\textrm{-}1}^{\!\mp1}}=\zeta^{\pm2\FG}\tr{\,\rho\!\sm 0{\textrm{-}1}1{\textrm{-}1}^{\!\mp1}}\:,
\end{multline*}
proving the assertion.
\end{proof}

\section{Summary and outlook}

We have been investigating the distribution of fundamental weights
(i.e. the weights of a freely generating set) of the module $\mof{}{\rho}$
of holomorphic vector-valued modular forms for well behaved representations
$\map{\rho}{\FD}{\gl{}V}$. We established relations between the
weight distribution and properties of the representation through the
consideration of freely generating sets for the modules of weakly
holomorphic forms of diverse weights, leading to the trace formulas
Eqs.\prettyref{eq:hilbpoly1} and \prettyref{eq:hilbpoly2}, expressing
the value of the Hilbert-polynomial $\hilb{\rho}z$ evaluated at special
arguments in terms of traces of representation matrices. These results
could lead to a better understanding of the fundamental weights, and
to effective procedures for determining them from representation theoretic
data. 

Of course, knowing the fundamental weights is far from knowing
explicitly a freely generating set, but this restricted information
could already be very useful.
In this respect, the connection between holomorphic generators and
weakly holomorphic ones of weight $0$, as expressed by \prettyref{thm:weakgens},
seems to be crucial: indeed, for simple examples of low dimension,
such considerations are already enough to compute the $q$-expansions
of a freely generating set. Promoting these\emph{ ad hoc }methods
to some generally valid algorithm would be especially important
for the study and applications of vector-valued modular forms.

\noindent 

\bibliographystyle{plain}

\begin{thebibliography}{10}

\bibitem{apostol2}
T.M. Apostol.
\newblock {\em Modular Functions and {D}irichlet Series in Number Theory},
  volume~41 of {\em GTM}.
\newblock Springer-Verlag, New York-Heidelberg-Berlin, 1976.

\bibitem{Bantay2006}
P.~Bantay and T.~Gannon.
\newblock Conformal characters and the modular representation.
\newblock {\em JHEP}, 02:005, 2006.

\bibitem{Bantay2008}
P.~Bantay and T.~Gannon.
\newblock Vector-valued modular forms for the modular group and the
  hypergeometric equation.
\newblock {\em Commun. Num. Theor. and Phys.}, 1:651--680, 2008.

\bibitem{diamond}
F.~Diamond and J.~Shurman.
\newblock {\em A first course in modular forms}, volume 228 of {\em GTM}.
\newblock Springer, New York-Heidelberg-Berlin, 2005.

\bibitem{Eichler1985}
M.~Eichler and D.~Zagier.
\newblock {\em The Theory of Jacobi Forms}.
\newblock Birkh\"auser, 1985.

\bibitem{Eisenbud}
D.~Eisenbud.
\newblock {\em Commutative algebra with a view toward algebraic geometry},
  volume 150 of {\em GTM}.
\newblock Springer, 1995.

\bibitem{DiFrancesco-Mathieu-Senechal}
P.~Di Francesco, P.~Mathieu, and D.~S{\' e}n{\' e}chal.
\newblock {\em {C}onformal {F}ield {T}heory}.
\newblock Springer, New York, 1997.

\bibitem{FLM1}
I.~Frenkel, J.~Lepowsky, and A.~Meurman.
\newblock {\em Vertex {O}perator {A}lgebras and the {M}onster}, volume 134 of
  {\em Pure and Applied Mathematics}.
\newblock Academic Press, New York, 1988.

\bibitem{GSW}
M.~Green, J.~Schwarz, and E.~Witten.
\newblock {\em Superstring Theory}, volume 1-2.
\newblock Cambridge University Press, Cambridge, 1987.

\bibitem{Kac}
V.G. Kac.
\newblock {\em Vertex Operators for Beginners}, volume~10 of {\em Univ. Lecture
  Series}.
\newblock Amer. Math. Soc., Providence, 1997.

\bibitem{Knopp2004}
M.~Knopp and G.~Mason.
\newblock Vector-valued modular forms and {P}oincar\'e series.
\newblock {\em Illinois J. Math.}, 48:1345--1366, 2004.

\bibitem{Knopp1970}
M.I. Knopp.
\newblock {\em Modular Functions in Analytic Number Theory}.
\newblock Markham, Chicago, 1970.

\bibitem{koblitz}
N.~Koblitz.
\newblock {\em Introduction to elliptic curves and modular forms}, volume~97 of
  {\em GTM}.
\newblock Springer-Verlag, New York-Heidelberg-Berlin, 1993.

\bibitem{Lang}
S.~Lang.
\newblock {\em Introduction to modular forms}, volume 222 of {\em Grundl. Math.
  Wiss.}
\newblock Springer-Verlag, New York-Heidelberg-Berlin, 1976.

\bibitem{Marks2009}
C.~Marks and G.~Mason.
\newblock Structure of the module of vector-valued modular forms.
\newblock {\em J.Lond.Math.Soc. (2)}, 82:32--48, 2010.

\bibitem{Mason2007}
G.~Mason.
\newblock Vector-valued modular forms and linear differential operators.
\newblock {\em Intl J. Number Th.}, 3:377--390, 2007.

\bibitem{Polch}
J.~Polchinski.
\newblock {\em String Theory}.
\newblock Cambridge University Press, Cambridge, 1995.

\bibitem{Selberg1965}
A.~Selberg.
\newblock On the estimation of fourier coefficients of modular forms.
\newblock {\em Proc. Sympos. Pure Math.}, 8:1--15, 1965.

\bibitem{Serre}
J.P. Serre.
\newblock {\em Linear representations of finite groups}, volume~42 of {\em
  GTM}.
\newblock Springer, Berlin-New York, 1977.

\bibitem{Zhu1996}
Y.~Zhu.
\newblock Modular invariance of characters of {V}ertex {O}perator {A}lgebras.
\newblock {\em J. Amer. Math. Soc.}, 9:237--302, 1996.

\end{thebibliography}

\end{document}